\theoremstyle{plain}
\newtheorem{theorem}{Theorem}
\newtheorem{corollary}[theorem]{Corollary}
\theoremstyle{definition}
\newtheorem{definition}[theorem]{Definition}
\newtheorem{example}[theorem]{Example}
\theoremstyle{remark}
\newtheorem{remark}[theorem]{Remark}
\newcommand{\be}{\begin{equation}}
\newcommand{\ee}{\end{equation}}
\newcommand{\beba}{\begin{equation}\begin{array}{lcl}}
\newcommand{\eaee}{\end{array}\end{equation}}
\newcommand{\bea}{\begin{eqnarray}}
\newcommand{\eea}{\end{eqnarray}}
\newcommand{\ba}{\begin{array}}
\newcommand{\ea}{\end{array}}
\newcommand{\ktt}{T_0T^\ast}
\numberwithin{equation}{section}
\newcommand{\seqnum}[1]{\href{http://oeis.org/#1}{\underline{#1}}}
\begin{document}

\title[Generalized Perfect Numbers]{Some Results on Generalized Multiplicative Perfect Numbers}

\author[A. Laugier]{Alexandre Laugier}
\address{Lyc{\'e}e professionnel Tristan Corbi{\`e}re, 16 rue de
Kerveguen - BP 17149, 29671 Morlaix cedex, France} 
\email{laugier.alexandre@orange.fr}

\author[M. P. Saikia]{Manjil P. Saikia}
\footnotetext[1]{Corresponding Author: manjil@gonitsora.com.}
\address{Universit\"at Wien, Fakult\"at f\"ur Mathematik, Oskar-Morgenstern-Platz 1, 1090 Wien, Austria} 
\email{manjil@gonitsora.com, manjil.saikia@univie.ac.at}

\author[U. Sarmah]{Upam Sarmah}
\address{HS Student, Nowgong College, Nagaon-782001, Assam, India.}
\email{upam.sarmah@gmail.com}

\subjclass[2010]{Primary 11A25; Secondary 11A41, 11B99}

\keywords{perfect numbers, unitary perfect number, multiplicative perfect number, e-perfect number, divisor function.}

\begin{abstract}
In this article, based on ideas and results by J. S\'andor \cite{sandor1, sandor}, we define $k$-multiplicatively $e$-perfect numbers and $k$-multiplicatively 
$e$-superperfect numbers and prove some results on them. We also characterize the $k$-$\ktt$-perfect numbers defined by Das and Saikia \cite{nntdm} in details.
\end{abstract}

\maketitle

\section{{Introduction}}

A natural number $n$ is said to be \textbf{perfect} (\seqnum{A000396}) if the sum of all proper divisors of $n$ is equal to $n$. Or equivalently, $\sigma(n)=2n$, where $\sigma(k)$ is the sum of the divisors of $k$. It is a well known result of Euler-Euclid that the form of even perfect numbers is $n=2^kp$, where $p=2^{k+1}-1$ is a Mersenne prime and $k\geq 1$. Till date, no odd perfect number is known, and it is believed that none exists. Moreover $n$ is said to be \textbf{super-perfect} if $\sigma(\sigma(n))=2n$. It was proved by Suryanarayana-Kanold \cite{kanold, surya} that the general form of such super-perfect numbers are $n=2^k$, where $2^{k+1}-1$ is a Mersenne prime and $k\geq 1$. No odd super-perfect numbers are known till date. Unless, otherwise mentioned all $n$ considered in this paper will be a natural number and $d(n)$ will be the number of divisors of $n$. We also denote by $\llbracket 1, n \rrbracket$, the set $\{1, 2, \ldots, n\}$ and by $\mathbb{N}^\ast$, the set $\mathbb{N}\cup \{0\}$.

Let $T(n)$ denote the product of all the divisors of $n$. A \textbf{multiplicatively perfect number} (\seqnum{A007422}) is a number $n$ such that $T(n)=n^2$ and $n$ is called \textbf{multiplicatively super-perfect} if $T(T(n))=n^2$. S\'andor \cite{sandor1} characterized such numbers and also numbers called \textbf{k-multiplicatively perfect numbers}, which are numbers $n$, such that $T(n)=n^k$ for $k\geq 2$. In this article we shall give some results on other classes of perfect numbers defined by various authors as well as by us. For a general introduction to such numbers, we refer the readers to subsection 1.11 in S\'andor and Crstici's book \cite[p. 55 -- 58]{sandornt} and to the article \cite{nntdm}, which contains various references to the existing literature.

\section{{\texorpdfstring{$k$}{k}-multiplicatively \texorpdfstring{$e$}{e}-perfect and superperfect numbers}}

S\'andor \cite{sandor} studied the multiplicatively $e$-perfect numbers defined below. If $n=p_1^{a_1}\cdots p_r^{a_r}$ is the prime factorization of $n>1$, a divisor $d$ of $n$ is called an \textbf{exponential divisor} (or, $e$-divisor for short) if $d=p_1^{b_1}\cdots p_r^{b_r}$ with $b_i\mid a_i$ for $i=1, \ldots, r$. This notion is due to Straus and Subbarao \cite{subba}. Let $\sigma_e(n)$ denote the sum of $e$-divisors of $n$, then Straus and Subbarao define $n$ as \textbf{exponentially perfect} (or, $e$-perfect for short) (\seqnum{A054979}) if $\sigma_e(n)=2n$. They proved that there are no odd $e$-perfect numbers, and for each $r$, such numbers with $r$ prime factors are finite. We refer the reader to the article \cite{sandor} for some historical comments and results related to such $e$-perfect numbers. In \cite{sandor3}, S\'andor 
also studied some type of $e$-harmonic numbers. An integer $n$ is called \textbf{$e$-harmonic of type $1$} if $\sigma_e(n)|nd_e(n)$, where $\sigma_e(n)$ (resp. $d_e(n)$) is the sum 
(resp. number) of $e$-divisors of $n$. It is easy to check that $$d_e(n)=d(a_1)\cdots d(a_r).$$ S\'andor \cite{sandor3} also defined $n$ to be \textbf{$e$-harmonic of type $2$} if 
$S_e(n)|nd_e(n)$, where \[S_e(n)=\prod_{i=1}^r\left(\sum_{d_i|a_i}p_i^{a_i-d_i}\right).\]

Let $T_e(n)$ denote the product of all the $e$-divisors of $n$. Then $n$ is called \textbf{multiplicatively $e$-perfect} if $T_e(n)=n^2$ and \textbf{multiplicatively $e$-superperfect} if $T_e(T_e(n))=n^2$. The main result of S\'andor \cite{sandor} is the following.

\begin{theorem}[S\'andor, \cite{sandor} Theorem 2.1]\label{thms1}

$n$ is multiplicatively $e$-perfect if and only if $n=p^a$, where $p$ is a prime and $a$ is an ordinary perfect number. $n$ is multiplicatively $e$-superperfect if and only if $n=p^a$, where $p$ is a prime and $a$ is an ordinary superperfect number, that is $\sigma(\sigma(a))=2a$.

\end{theorem}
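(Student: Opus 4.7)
The plan is to compute $T_e(n)$ explicitly from the prime factorization of $n$, observe that the defining equations split prime-by-prime, and then reduce both halves of the theorem to an arithmetic condition on a single exponent. If $n = p_1^{a_1}\cdots p_r^{a_r}$, then the exponent of $p_i$ in $T_e(n)$ is $\sigma(a_i)\prod_{j\neq i} d(a_j)$: summing the coordinate $b_i$ over divisors of $a_i$ contributes $\sigma(a_i)$, and each remaining coordinate $b_j$ contributes a multiplicative factor $d(a_j)$ counting its independent choices. In particular, $T_e(n) = n^2$ is equivalent to
\[
\frac{\sigma(a_i)}{a_i}\prod_{j\neq i}d(a_j) = 2 \qquad (i=1,\dots,r).
\]

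For the multiplicatively $e$-perfect direction I would rule out $r\geq 2$ as follows. If all $a_i=1$, each left-hand side equals $1$, not $2$; otherwise, fix $i_0$ with $a_{i_0}>1$ and any $k\neq i_0$. The equation at index $k$ has left-hand side at least $\frac{\sigma(a_k)}{a_k}\cdot d(a_{i_0})\geq 1\cdot 2 = 2$, and equality forces $a_k=1$, $a_j=1$ for all $j\neq k,i_0$, and $d(a_{i_0})=2$ (so $a_{i_0}$ is prime). Substituting into the equation at $i_0$ gives $\sigma(a_{i_0})=2a_{i_0}$, making the prime $a_{i_0}$ an ordinary perfect number, which is impossible. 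Hence $r=1$, and $T_e(p^a)=p^{\sigma(a)}$ reduces the perfect condition to $\sigma(a)=2a$, recovering the stated characterization.

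For the superperfect half I would set $\beta_i:=\sigma(a_i)\prod_{j\neq i}d(a_j)$, the exponent of $p_i$ in $T_e(n)$, and apply the same identity to $T_e(T_e(n))$; then $T_e(T_e(n))=n^2$ becomes $\sigma(\beta_i)\prod_{j\neq i}d(\beta_j)=2a_i$ for each $i$. Using $\sigma(\beta_i)\geq \beta_i$ this implies $\prod_{k\neq i} d(a_k)\,d(\beta_k)\leq 2$. Assuming $r\geq 2$, if some $a_k$ with $k\neq i$ exceeded $1$, the estimates $d(a_k)\geq 2$ and $d(\beta_k)\geq 2$ (since $\beta_k\geq \sigma(a_k)\geq 2$) would violate this bound. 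So every $a_k$ equals $1$; but then $n$ is squarefree, $T_e(n)=n$, and $T_e(T_e(n))=n\neq n^2$, a contradiction. Therefore $r=1$ and $T_e(T_e(p^a))=p^{\sigma(\sigma(a))}$ gives $\sigma(\sigma(a))=2a$. The main obstacle is precisely this second direction: unlike the perfect case, the coupling between the outer $\sigma$ and the highly nonlinear inner expression $\beta_i$ resists direct analysis, and the argument relies on the crude but decisive inequality $\sigma(\beta_i)\geq \beta_i$ to propagate the prime-power conclusion inward.
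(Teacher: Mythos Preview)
The paper does not actually prove this statement: Theorem~\ref{thms1} is quoted from S\'andor~\cite{sandor} and used as a black box (e.g.\ in Theorems~\ref{thmo1} and~\ref{thmo2}), so there is no in-paper proof to compare against. That said, your argument is correct and self-contained. You recover the formula the paper records as~\eqref{lt}, and your elimination of $r\geq 2$ in both halves is sound. In the $e$-perfect case, the equality analysis at index $k\neq i_0$ indeed forces $a_j=1$ for all $j\neq i_0$ together with $d(a_{i_0})=2$, and then the equation at $i_0$ asks a prime to be perfect, which is impossible. In the $e$-superperfect case, the chain
\[
2a_i=\sigma(\beta_i)\prod_{j\neq i}d(\beta_j)\ \geq\ \beta_i\prod_{j\neq i}d(\beta_j)\ \geq\ a_i\prod_{j\neq i}d(a_j)\,d(\beta_j)
\]
legitimately gives $\prod_{j\neq i}d(a_j)\,d(\beta_j)\leq 2$; since $a_k>1$ with $k\neq i$ would force $d(a_k)\geq 2$ and $\beta_k\geq\sigma(a_k)\geq 3$, hence $d(\beta_k)\geq 2$, the product would be at least $4$. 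Varying $i$ then makes $n$ squarefree, whence $T_e(n)=n$ and $T_e(T_e(n))=n\neq n^2$, the desired contradiction. One cosmetic point: you may want to note explicitly that $n>1$ is assumed (so the squarefree contradiction is genuine), and that in the perfect case your equality argument at a single $k$ already pins down \emph{all} $a_j$ with $j\neq i_0$, not just $a_k$ itself.
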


We now give two result on $e$-harmonic numbers below, before we proceed to the main goal of our paper, that is to define and characterize some other classes of numbers.

\begin{theorem}\label{thmo1}
 If $n$ is multiplicatively $e$-perfect or multiplicatively $e$-superperfect, then $n$ is $e$-harmonic of type $1$ if and only if $\sigma_e(n)/p|d_e(n)$, where $p$ is the prime 
 as described in Theorem \ref{thms1}.
\end{theorem}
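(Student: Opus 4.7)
The plan is to apply Theorem \ref{thms1} directly: in either case (multiplicatively $e$-perfect or multiplicatively $e$-superperfect), $n$ has the very restrictive form $n = p^a$ with $p$ prime and $a$ an ordinary (super)perfect number. This collapses the problem to a single prime power, so both $\sigma_e(n)$ and $d_e(n)$ become explicit: $d_e(n) = d(a)$ and $\sigma_e(n) = \sum_{b \mid a} p^b$.

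The key step is then a factorisation argument. Since $1$ is always a divisor of $a$, the term $p^1 = p$ appears in $\sigma_e(n)$, so I would write
\[
\sigma_e(n) = p \cdot S, \qquad S := 1 + \sum_{\substack{b \mid a \\ b > 1}} p^{\,b-1}.
\]
Every term of $S$ except the leading $1$ is divisible by $p$, hence $S \equiv 1 \pmod{p}$, which shows $\gcd(S, p) = 1$ and consequently $\gcd(S, p^{a-1}) = 1$. Thus $S = \sigma_e(n)/p$ is coprime to $p$.

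With this in hand, the $e$-harmonic-of-type-$1$ condition $\sigma_e(n) \mid n \, d_e(n)$ becomes
\[
pS \;\Big|\; p^a\, d(a) \quad\Longleftrightarrow\quad S \;\Big|\; p^{\,a-1}\, d(a),
\]
and coprimality of $S$ with $p^{a-1}$ lets one strip off the $p$-power factor to obtain $S \mid d(a) = d_e(n)$. Reversing the equivalences gives the converse, completing the biconditional.

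The main obstacle I anticipate is essentially bookkeeping: one must verify carefully that the factorisation of $p$ out of $\sigma_e(n)$ yields a factor coprime to $p$ (using that $1 \in \{b : b \mid a\}$ is the \emph{only} divisor giving a term not divisible by $p$). Everything else is a direct manipulation; no arithmetic property of the perfect number $a$ beyond $1 \mid a$ is actually needed, which explains why the same statement works uniformly in the multiplicatively $e$-perfect and multiplicatively $e$-superperfect cases.
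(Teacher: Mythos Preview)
Your proof is correct and follows the same route as the paper: reduce to $n = p^a$ via Theorem~\ref{thms1} and then analyze the $e$-harmonic divisibility condition $\sigma_e(n)\mid n\,d_e(n)$. In fact your argument is more complete than the paper's, which simply records that condition and asserts it is ``enough to verify our claim'' without isolating the coprimality step $\gcd(\sigma_e(n)/p,\,p)=1$ that you correctly identify as the point making the biconditional work.
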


\begin{proof}
 We prove the result for the case when $n$ is multiplicatively $e$-perfect, the other case is similar. By Theorem \ref{thms1}, $n=p^a$ where $p$ is a prime and $a$ is an ordinary perfect number. So, 
 for $n$ to be $e$-harmonic of type $1$, we must have $\sigma_e(n)|nd_e$, which is enough to verify our claim.

\end{proof}

\begin{theorem}\label{thmo2}
 If $n$ is multiplicatively $e$-perfect or multiplicatively $e$-superperfect, then $n$ is $e$-harmonic of type $2$ if and only if $S_e(n)|d_e(n)$.
\end{theorem}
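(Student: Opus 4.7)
The plan is to reduce everything to the one-prime-power case supplied by Theorem \ref{thms1} and then show that $S_e(n)$ is automatically coprime to the prime base $p$, so the factor $n = p^a$ in the type-$2$ $e$-harmonic condition is irrelevant to divisibility by $S_e(n)$.

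First I would invoke Theorem \ref{thms1} to write $n = p^a$ with $p$ prime and $a$ either an ordinary perfect number (in the $e$-perfect case) or an ordinary superperfect number (in the $e$-superperfect case). Since $n$ has a single prime factor, the general formula specializes to $d_e(n) = d(a)$ and
\[
S_e(n) \;=\; \sum_{d \mid a} p^{\,a-d}.
\]
The type-$2$ condition $S_e(n)\mid n\, d_e(n)$ thus reads $S_e(n)\mid p^{a}\, d(a)$, and the claimed equivalent condition reads $S_e(n)\mid d(a)$.

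Next I would establish the key arithmetic step: $\gcd(S_e(n),p)=1$. The sum $S_e(n)=\sum_{d\mid a} p^{a-d}$ contains the term corresponding to $d=a$, which equals $p^{0}=1$, while every other divisor $d$ of $a$ satisfies $d<a$ (using that $a\geq 6$ since $a$ is ordinary perfect or superperfect), so the remaining summands are all divisible by $p$. Hence $S_e(n)\equiv 1 \pmod{p}$, and consequently $\gcd(S_e(n),p^a)=1$.

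Finally, from this coprimality and the elementary fact that if $\gcd(A,B)=1$ then $A\mid BC \iff A\mid C$, applied with $A=S_e(n)$, $B=p^a$, $C=d(a)=d_e(n)$, the divisibility $S_e(n)\mid n\,d_e(n)$ is equivalent to $S_e(n)\mid d_e(n)$, which is the statement of the theorem. The only real content is the coprimality observation; beyond that the argument is a direct unwinding of the definitions using the structure theorem. The superperfect case is handled verbatim, since Theorem \ref{thms1} guarantees the same shape $n=p^a$.
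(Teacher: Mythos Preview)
Your argument is correct and follows exactly the route the paper has in mind: invoke Theorem~\ref{thms1} to reduce to $n=p^a$, then observe the coprimality $\gcd(S_e(n),p)=1$ so that the factor $n=p^a$ drops out of the divisibility. The paper itself omits the proof entirely, pointing to the analogous (and very terse) argument for Theorem~\ref{thmo1}; your write-up simply spells out the key coprimality step that the paper leaves implicit. (The parenthetical about $a\ge 6$ is unnecessary---$S_e(n)\equiv 1\pmod{p}$ holds for any $a\ge 1$---but it does no harm.)
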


\noindent We skip the proof of Theorem \ref{thmo2}, as it is similar to the proof of Theorem \ref{thmo1} and uses Theorem \ref{thms1} in a similar way.

Inspired by the work of others in introducing generalized multiplicative perfect numbers, we now introduce the following two classes of numbers which are a natural generalization to the concept of $e$-perfect numbers..

\begin{definition}
A natural number $n$ is called \textbf{$k$-multiplicatively $e$-perfect} if $T_e(n)=n^k$, where $k\geq 2$. 
\end{definition}

\begin{definition}

A natural number $n$ is called \textbf{$k$-multiplicatively $e$-superperfect number} if $T_e(T_e(n))=n^k$, where $k\geq 2$.
\end{definition}

Before proceeding, we note from S\'andor \cite{sandor} that 
\begin{equation}\label{lt}
T_e(n)=p_1^{\sigma(a_1)d(a_2)\cdots d(a_r)}\cdots p_r^{\sigma(a_r)d(a_1)\cdots d(a_{r-1})}.
\end{equation}

\noindent S\'andor \cite{sandor2} also gave an alternate expression for $T_e(n)$ in terms of the arithmetical function $t(n)$ defined as $$t(n)=p_1^{2\frac{\sigma(a_1)}{d(a_1)}}\cdots p_r^{2\frac{\sigma(a_r)}{d(a_r)}}$$ with $t(1)=1$. We have from S\'andor \cite{sandor2}
\begin{equation}\label{ntuse}
T_e(n)=(t(n))^{d_e(n)/2}.
\end{equation}
\noindent We however, do not use \eqref{ntuse} in this note as we are only interested in the canonical forms of the numbers we have so far defined.

Before we characterize these classes of numbers we work on a few examples. 

\begin{example}
There exist $6k$-multiplicatively $e$-perfect numbers
with $k\in\mathbb{N}^{\star}$, which have the form
$(p_1\cdot p_2\cdot p_3)^{\alpha}$ provided 
$$
\sigma(\alpha)d(\alpha)^2=6k\alpha
$$

\noindent Thus, by \eqref{lt}, we have

$$
T_e((p_1\cdot p_2\cdot p_3)^{\alpha})=(p_1\cdot p_2\cdot p_3)^{\sigma(\alpha)d(\alpha)^2}.
$$

\end{example}

\begin{example}
There exist $k$-multiplicatively $e$-superperfect numbers for a nonzero positive even number $k$. For instance, by a routine application of \eqref{lt}  it can be verified that there exist
\begin{enumerate}[label=$\bullet$]
\item $20$-multiplicatively $e$-superperfect numbers which have the form $(p_1\cdot p_2)^3$;
\item $24$-multiplicatively $e$-superperfect numbers which have the form $(p_1\cdot p_2)^2$;
\item $32$-multiplicatively $e$-superperfect numbers which have the form $(p_1\cdot p_2)^4$;
\item $48$-multiplicatively $e$-superperfect numbers which have the form $(p_1\cdot p_2)^{16}$;
\item $64$-multiplicatively $e$-superperfect numbers which have the form $(p_1\cdot p_2)^{64}$;
\item $110$-multiplicatively $e$-superperfect numbers which have the form $(p_1\cdot p_2)^{93}$;
\item $168$-multiplicatively $e$-superperfect numbers which have the form $p^{6}_1\cdot p^{16}_2$ or $(p_1\cdot p_2)^{27}$;
\item $216$-multiplicatively $e$-superperfect numbers which have the form $(p_1\cdot p_2)^{14}$;
\item $234$-multiplicatively $e$-superperfect numbers which have the form $(p_1\cdot p_2)^{10}$;
\item $252$-multiplicatively $e$-superperfect numbers which have the form $(p_1\cdot p_2)^8$.
\end{enumerate}

In the following, we explain some of these examples. By analysis of the cases above, we notice that there exist for nonzero positive integers $m$ such that $2^{m+1}-1$ is prime (and so $m+1$ is prime), $8(m+2)$-multiplicatively $e$-superperfect numbers which have the form $(p_1\cdot p_2)^{2^m}$. Indeed, according to \eqref{lt}, we have
$$
T_e((p_1\cdot p_2)^{2^m})=(p_1\cdot p_2)^{\sigma(2^m)\cdot d(2^m)}
=(p_1\cdot p_2)^{(2^{m+1}-1)\cdot(m+1)}
$$
and since we assume that $m+1$ and $2^{m+1}-1$ are prime, we have

\begin{align*}
T_e(T_e((p_1\cdot p_2)^{2^m})) &= (p_1\cdot p_2)^{\sigma((2^{m+1}-1)\cdot(m+1))\cdot d((2^{m+1}-1)\cdot(m+1))}\\
 &= (p_1\cdot p_2)^{2^{m+1}\cdot(m+2)\cdot 4}=((p_1\cdot p_2)^{2^m})^{8(m+2)}.
\end{align*}

Moreover, by analysis of the cases above, we can notice that there exist for some odd prime number $p$, $2p$-multiplicatively $e$-superperfect numbers which have the form $(p_1\cdot p_2)^{2p}$. To see this, by \eqref{lt}, we have
$$
T_e((p_1\cdot p_2)^{2p})=(p_1\cdot p_2)^{\sigma(2p)\cdot d(2p)}
=(p_1\cdot p_2)^{3\cdot(p+1)\cdot 4}=(p_1\cdot p_2)^{12\cdot(p+1)}.
$$
If we can represent $p$ as $p=2^m q-1$ with $m$ a nonzero positive integer and $q$ an odd positive integer, then we have
$$
T_e((p_1\cdot p_2)^{2p})=(p_1\cdot p_2)^{2^{m+2}q\cdot 3}.
$$

If $q=3$, we have
$$
T_e(T_e((p_1\cdot p_2)^{2p}))=(p_1\cdot p_2)^{\sigma(2^{m+2}\cdot 3^2)\cdot d(2^{m+2}\cdot 3^2)}=(p_1\cdot p_2)^{(2^{m+3}-1)\cdot 13\cdot(m+3)\cdot 3}.
$$
We search a solution such that $2^{m+3}-1$ is divisible by $p=2^m\cdot 3-1$. That is, an integer $x\geq 1$ such that
$$
2^{m+3}-1=xp=x(2^m\cdot 3-1).
$$
This gives

\begin{align*}
    2^m\cdot 8-1 &= xp\\
    \Rightarrow 3\cdot 2^m\cdot 8-3 &= 3xp\\
    \Rightarrow (p+1)\cdot 8-3 &= 3xp\\
    \Rightarrow (3x-8)\cdot p &= 5.
\end{align*}
Since $p$ is a prime this implies that $p=5$ and $x=3$. Therefore, $m=1$ and we recover that
$$
T_e(T_e((p_1\cdot p_2)^{10}))=(p_1\cdot p_2)^{15\cdot 13\cdot 12}
=((p_1\cdot p_2)^{10})^{234}.
$$

If $q$ is not divisible by $3$, then we have
\begin{align*}
T_e(T_e((p_1\cdot p_2)^{2p})) &= (p_1\cdot p_2)^{\sigma(2^{m+2}q\cdot 3)\cdot d(2^{m+2}q\cdot 3)}=(p_1\cdot p_2)^{(2^{m+3}-1)\cdot\sigma(q) 4\cdot(m+3)\cdot d(q)\cdot 2}\\
 &= (p_1\cdot p_2)^{8(m+3)\cdot \sigma(q)\cdot d(q)\cdot(2^{m+3}-1)}.
\end{align*}

We search a solution such that $2^{m+3}-1$ is divisible by $p$. That is, an integer $y\geq 1$ such that
$$
2^{m+3}-1=yp=y(2^m q-1).
$$
This will reduce to $$
(qy-8)\cdot p=8-q.
$$
If $q>7$, then $8-q<0$ where as $(qy-8)\cdot p>0$. We reach a contradiction meaning that the only possible values of $q$ are $1,5,7$.
If $q=1$, then we have $(y-8)\cdot p=7$. This implies that $p=7, y=9$ and $m=3$ and so on
$$
T_e(T_e((p_1\cdot p_2)^{2p}))=(p_1\cdot p_2)^{8\cdot 6\cdot 63}
=((p_1\cdot p_2)^{14})^{216}.
$$
If $q=5, 7$, then we get no integral solutions for $y$.
\end{example}

Now, we characterize some of these classes of numbers in the following theorems. Note that Theorem \ref{t2-2} is analogous to Theorem \ref{thms1} of S\'andor.

\begin{theorem}\label{t2-2}
If $n=p^a$, where $p$ is a prime and $a$ is a $k$-perfect number, then
$n$ is $k$-multiplicatively $e$-perfect. If $n=p^a$, where $p$ is a
prime and $a$ is a $k$-superperfect number, then 
$n$ is $k$-multiplicatively $e$-superperfect.
\end{theorem}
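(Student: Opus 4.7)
The plan is to exploit the simplicity of \eqref{lt} when $n$ has a single prime in its factorization. Writing $n=p^a$ (so $r=1$ and $a_1=a$), formula \eqref{lt} collapses to
\[
T_e(p^a)=p^{\sigma(a)},
\]
since all the ``$d(a_j)$'' factors with $j\ne 1$ disappear. This one identity does all the work.

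For the first assertion, I substitute the defining relation of a $k$-perfect number, namely $\sigma(a)=ka$, into the displayed formula above. This gives $T_e(n)=p^{\sigma(a)}=p^{ka}=(p^a)^k=n^k$, which is exactly the condition that $n$ be $k$-multiplicatively $e$-perfect.

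For the second assertion, I apply $T_e$ twice. Since $T_e(n)=p^{\sigma(a)}$ is again a prime power (with exponent $\sigma(a)$), the same collapsed form of \eqref{lt} applies to it, yielding
\[
T_e(T_e(n))=T_e\bigl(p^{\sigma(a)}\bigr)=p^{\sigma(\sigma(a))}.
\]
Now I use the defining relation of a $k$-superperfect number, $\sigma(\sigma(a))=ka$, to conclude that $T_e(T_e(n))=p^{ka}=n^k$, which is the condition that $n$ be $k$-multiplicatively $e$-superperfect.

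There is essentially no obstacle: the only subtle point is recognizing that for $n=p^a$ the product over ``other primes'' in \eqref{lt} is empty, so the exponent of $p$ in $T_e(n)$ is simply $\sigma(a)$ rather than $\sigma(a)d(a_2)\cdots d(a_r)$. Once that observation is in hand, both statements are one-line substitutions using the definitions of $k$-perfect and $k$-superperfect integers, completely parallel to the argument S\'andor uses for Theorem~\ref{thms1}.
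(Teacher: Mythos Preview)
Your proof is correct and follows essentially the same approach as the paper's: both reduce \eqref{lt} in the prime-power case to $T_e(p^a)=p^{\sigma(a)}$, then substitute the defining relations $\sigma(a)=ka$ and $\sigma(\sigma(a))=ka$ for the two parts. The only cosmetic difference is that the paper cites S\'andor for the identity $T_e(p^a)=p^{\sigma(a)}$ whereas you read it off directly from \eqref{lt}.
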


\begin{proof}
We know from S\'andor \cite{sandor}, that if $n=p^a$ where $p$ is a prime and $a$ is a
non-zero positive integer, then we have
$$
T_e(n)=p^{\sigma(a)}.
$$
So, if $n=p^a$ where $p$ is a prime and $a$ is a $k$-perfect number,
then
$$
\sigma(a)=ka,
$$
and so
$$
T_e(n)=p^{ka}=n^k.
$$
Moreover, if $n=p^a$ where $p$ is a prime and $a$ is a non-zero
positive integer, then we have
$$
T_e(T_e(n))=T_e(p^{\sigma(a)})=p^{\sigma(\sigma(a))}.
$$
So, if $n=p^a$ where $p$ is a prime and $a$ is a $k$-perfect number,
then
$$
\sigma(\sigma(a))=ka,
$$
and so
$$
T_e(T_e(n))=p^{ka}=n^k.
$$
\end{proof}

\begin{theorem}\label{t2-3}
Let $p$ be a prime number and let $n=p^{\alpha_1}_1\cdots p^{\alpha_r}_r$, with
$r\in\mathbb{N^\ast}$ be the prime factorisation
of an integer $n>1$ where $p_i$ with $i\in\llbracket 1,r\rrbracket$
are prime numbers and $\alpha_i\in\mathbb{N^\ast}$ for all
$i\in\llbracket 1,r\rrbracket$. $n$ is $p$-multiplicatively
$e$-perfect if and only if for each $i\in\llbracket 1,r\rrbracket$, we have
$$
\sigma(\alpha_i)=\gcd(\alpha_i,\sigma(\alpha_i))p
$$
and
$$
\alpha_i=\gcd(\alpha_i,\sigma(\alpha_i))
{\displaystyle\prod_{j\in\llbracket 1,r\rrbracket\setminus\{i\}}}d(\alpha_j),
$$
with
$$
2^{r-1}\leq{\displaystyle\prod_{j\in\llbracket 1,r\rrbracket\setminus\{i\}}}d(\alpha_j)<p,
$$
where we set
$$
{\displaystyle\prod_{j\in\emptyset}}d(\alpha_j)=1.
$$
In particular, if $r=1$, then $\alpha_1|\sigma(\alpha_1)$ and we have
$$
\sigma(\alpha_1)=\alpha_1p.
$$
\end{theorem}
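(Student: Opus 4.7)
The plan is to rewrite $T_e(n)=n^p$ via formula~\eqref{lt} as a system of scalar equations, one per index $i$, and then extract the stated conditions by a coprimality/primality argument.

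First, comparing the exponent of $p_i$ on the two sides of $T_e(n)=n^p$ and invoking \eqref{lt}, the $p$-multiplicatively $e$-perfect condition becomes, for every $i\in\llbracket 1,r\rrbracket$,
$$
\sigma(\alpha_i)\,P_i=p\,\alpha_i,\qquad P_i:=\prod_{j\in\llbracket 1,r\rrbracket\setminus\{i\}}d(\alpha_j).
$$
The special case $r=1$ corresponds to the empty product $P_1=1$ and yields $\sigma(\alpha_1)=p\alpha_1$, so $\alpha_1\mid\sigma(\alpha_1)$; this settles the ``in particular'' clause immediately.

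Next, for each $i$ I would strip off the common factor $g_i:=\gcd(\alpha_i,\sigma(\alpha_i))$ by writing $\alpha_i=g_i a_i$ and $\sigma(\alpha_i)=g_i s_i$ with $\gcd(a_i,s_i)=1$. The scalar equation becomes $s_i P_i=p\,a_i$, and coprimality of $a_i$ and $s_i$ forces $a_i\mid P_i$. Writing $P_i=a_i k_i$ gives $s_i k_i=p$, and since $p$ is prime there are exactly two possibilities: $(s_i,k_i)=(p,1)$ or $(s_i,k_i)=(1,p)$.

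The first case $(s_i,k_i)=(p,1)$ unpacks, after multiplying by $g_i$, into the two announced equalities $\sigma(\alpha_i)=g_i p$ and $\alpha_i=g_i P_i$. The second case $(s_i,k_i)=(1,p)$ gives $\sigma(\alpha_i)=g_i\mid\alpha_i$; since $\sigma(m)\ge m$ with equality iff $m=1$, this forces $\alpha_i=1$ and hence $P_i=p$, which is precisely what the strict upper bound $P_i<p$ is designed to exclude. This is the delicate step: it is here that the primality of $p$ does the real work, and it pinpoints why the upper bound must appear in the statement. I expect this case analysis, and especially the careful identification of what the strict bound eliminates, to be the main obstacle; the rest is bookkeeping.

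Finally, I would verify that the two announced bounds are automatic in the surviving first case, which shows they are genuinely compatible with (and characterised by) the $p$-multiplicatively $e$-perfect property. Dividing $\sigma(\alpha_i)=g_i p$ by $\alpha_i=g_i P_i$ gives $\sigma(\alpha_i)/\alpha_i=p/P_i$; since this case forces $\alpha_i\ge 2$ (otherwise $\sigma(\alpha_i)=1=g_i p$ would be impossible), one has $\sigma(\alpha_i)/\alpha_i>1$ and therefore $P_i<p$. Having $\alpha_j\ge 2$ for every $j$ then gives $d(\alpha_j)\ge 2$, so $P_i\ge 2^{r-1}$. The converse direction is a one-line check: assuming the stated conditions for each $i$, $\sigma(\alpha_i)P_i=(g_i p)P_i=p(g_i P_i)=p\alpha_i$, which is exactly the equation obtained from \eqref{lt}.
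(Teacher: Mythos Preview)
Your overall strategy coincides with the paper's: translate $T_e(n)=n^p$ via \eqref{lt} into the system $\sigma(\alpha_i)P_i=p\alpha_i$, strip off $g_i=\gcd(\alpha_i,\sigma(\alpha_i))$ to obtain $s_iP_i=pa_i$ with $\gcd(a_i,s_i)=1$, and use primality of $p$ to conclude $s_i=p$, $P_i=a_i$. The converse check and the derivation of the strict upper bound $P_i<p$ from $\sigma(\alpha_i)>\alpha_i$ are also the same as in the paper.

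The gap is in how you dispose of the case $(s_i,k_i)=(1,p)$. You correctly reduce it to $\alpha_i=1$ and $P_i=p$, but then you say this is ``what the strict upper bound $P_i<p$ is designed to exclude''. That is circular: in the ``only if'' direction the inequality $P_i<p$ is part of the conclusion you are trying to prove, not a hypothesis you may invoke. For the same reason, your derivation of the lower bound $P_i\ge 2^{r-1}$ is incomplete: you only know $\alpha_j\ge 2$ for those indices $j$ that happen to land in the first case, yet you need it for \emph{all} $j$.

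The paper handles this by a separate preliminary argument carried out \emph{before} the gcd analysis: assuming some $\alpha_i=1$ (say $\alpha_1=1$), the equation for $i=1$ gives $\prod_{j\ge 2}d(\alpha_j)=p$, so exactly one $d(\alpha_j)$ equals the prime $p$ and all others equal $1$, forcing all remaining $\alpha_j=1$; feeding this back into the other equations yields a contradiction. Only after establishing $\alpha_i\ge 2$ for every $i$ does the paper observe that $s_i>1$ (so your second case is impossible from the outset) and then run the coprimality argument. You should insert an analogous global step ruling out $\alpha_i=1$ before your index-by-index dichotomy.
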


\begin{proof}

If $r=1$, then using Theorem \ref{t2-2}, $n=p^{\alpha_1}_1$ is
$p$-multiplicatively $e$-perfect if and only if $\alpha_1$ is a
$p$-perfect number.

We now assume that $r\geq 2$. We have 
\be\label{E2-1}
T_e(n)=n^p\,\Leftrightarrow\,\left\{\begin{array}{c}
\sigma(\alpha_1) d(\alpha_2)\cdots d(\alpha_r)=p\alpha_1;
\\
\vdots
\\
\sigma(\alpha_r) d(\alpha_1)\cdots d(\alpha_{r-1})=p\alpha_r.
\end{array}\right.
\ee
Clearly, if for each $i\in\llbracket 1,r\rrbracket$, we have
$$
\sigma(\alpha_i)=\gcd(\alpha_i,\sigma(\alpha_i))p
$$
and
$$
\alpha_i=\gcd(\alpha_i,\sigma(\alpha_i))
{\displaystyle\prod_{j\in\llbracket 1,r\rrbracket\setminus\{i\}}}d(\alpha_j)
$$
then it can be easily verified that (\ref{E2-1}) is satisfied.

Now we notice that if all $\alpha_i$ ($i\in\llbracket 1,r\rrbracket$) are
equal to $1$, then (\ref{E2-1}) is consistent only if
$p=1$. It is impossible since $p$ is a prime number.
If at least an $\alpha_i$ is equal to $1$, say $\alpha_1=1$, then from
(\ref{E2-1}), we have
$$
d(\alpha_2)\cdots d(\alpha_r)=p.
$$
Then one of $d(\alpha_2),\ldots,d(\alpha_r)$ is equal to the prime
$p$ and the others are equal to $1$. Say
$$
d(\alpha_2)=p,
$$
and
$$
d(\alpha_3)=\cdots=d(\alpha_r)=1.
$$
It implies that
$$
\alpha_1=\alpha_3=\cdots=\alpha_r=1.
$$
Then the equation $\sigma(\alpha_2)d(\alpha_1)d(\alpha_3)\cdots
d(\alpha_r)=p\alpha_2$ of (\ref{E2-1}) gives $\sigma(\alpha_2)=p\alpha_2$. But, for all $i\in\llbracket 1,r\rrbracket\setminus\{1,2\}$,
the equation $\sigma(\alpha_i)d(\alpha_1)d(\alpha_2)\cdots 
d(\alpha_r)=p\alpha_i$ of (\ref{E2-1}) gives
$\sigma(\alpha_i)=\alpha_i$ which is not possible since we know that $\sigma(\alpha_i)\geq\alpha_i+1$. So, we must have $\alpha_i\geq 2$ for all $i\in\llbracket 1,r\rrbracket$. Thus,
$$
{\displaystyle\prod_{j\in\llbracket 1,r\rrbracket\setminus\{i\}}}d(\alpha_j)
\geq 2^{r-1}.
$$

We now prove that if (\ref{E2-1}) is true, then
for each $i\in\llbracket 1,r\rrbracket$, we have
$$
\sigma(\alpha_i)=\gcd(\alpha_i,\sigma(\alpha_i))p
$$
and
$$
\alpha_i=\gcd(\alpha_i,\sigma(\alpha_i))
{\displaystyle\prod_{j\in\llbracket 1,r\rrbracket\setminus\{i\}}}d(\alpha_j).
$$
Let $g_i=\gcd(\alpha_i,\sigma(\alpha_i))$ for all
$i\in\llbracket 1,r\rrbracket$. So, for each $i\in\llbracket
1,r\rrbracket$, there exists two non-zero positive integer $a_i,s_i$
such that $\alpha_i=g_ia_i$ and $\sigma(\alpha_i)=g_is_i$ with $\gcd(a_i,s_i)=1$. Notice that $s_i>1$. Otherwise, we would have $\sigma(\alpha_i)|\alpha_i$, a contradiction.

For each $i\in\llbracket 1,r\rrbracket$, the equation
$$
\sigma(\alpha_i)
{\displaystyle\prod_{j\in\llbracket 1,r\rrbracket\setminus\{i\}}}d(\alpha_j)=p\alpha_i
$$
of (\ref{E2-1}) now gives
$$
s_i{\displaystyle\prod_{j\in\llbracket 1,r\rrbracket\setminus\{i\}}}d(\alpha_j)=pa_i.
$$
Since $\gcd(a_i,s_i)=1$, then from Euclid's lemma we get $a_i|{\displaystyle\prod_{j\in\llbracket 1,r\rrbracket\setminus\{i\}}}d(\alpha_j)$, and $s_i|p$. So, there exists an integer $k$ such that ${\displaystyle\prod_{j\in\llbracket 1,r\rrbracket\setminus\{i\}}}d(\alpha_j)=ka_i$, and $p=ks_i$.

Using the fact that $p$ is a prime and $s_i>1$, we have that $k=1$
and we get ${\displaystyle\prod_{j\in\llbracket 1,r\rrbracket\setminus\{i\}}}d(\alpha_j)=a_i$, and $p=s_i$. Therefore $\sigma(\alpha_i)=g_ip$ and $\gcd(p,a_i)=1$. Moreover, since $\sigma(\alpha_i)\geq\alpha_i+1$, we have
$$
g_i(p-a_i)\geq 1
$$
implying that $p>a_i$.
\end{proof}

\begin{example}
Let $n=p^{\alpha}_1\cdot p^{\alpha}_2$ be the prime factorisation of an
integer $n>1$ where $p_1$ and $p_2$ are prime numbers. Let $\alpha=18$. So, we have 
$$
d(\alpha)=d(2\cdot 3^2)=6\qquad\sigma(\alpha)=1+2+3+6+9+18=39=3\cdot 13=3p
$$
where $p=13$. Moreover, we have also
$$
\gcd(\alpha,\sigma(\alpha))=\gcd(18,39)=\gcd(3\cdot 6,3\cdot 13)
=3\cdot\gcd(6,13)=3\neq\alpha
$$
and
$$
\gcd(\alpha,\sigma(\alpha))\cdot d(\alpha)=3\cdot 6=18=\alpha.
$$
At this stage, we notice that Theorem \ref{t2-3} can be applied. Let us verify that
it is the case. We have
$$
T_e(n)=p^{\sigma(18)\cdot d(18)}_1\cdot p^{\sigma(18)\cdot d(18)}_2=p^{39\cdot 6}_1\cdot
p^{39\cdot 6}_2=p^{3\cdot 13\cdot 6}_1\cdot p^{3\cdot 13\cdot 6}_2
=(p^{18}_1\cdot p^{18}_2)^{13}=n^p.
$$
So, for all primes $p_1$, $p_2$, integers of the form
$p^{18}_1\cdot p^{18}_2$ are $13$-multiplicatively $e$-perfect
numbers.
\end{example}

\begin{example}
Let $n=p^{\alpha}_1\cdot p^{\alpha}_2\cdot p^{\alpha}_3$ be the
prime factorisation of an integer $n>1$ where $p_1$, $p_3$ and $p_2$
are prime numbers. Let $\alpha=9$. So, we have 
$$
d(\alpha)=d(3^2)=3\qquad\sigma(\alpha)=1+3+9=13=1\cdot 13=1\cdot p.
$$
where $p=13$. Moreover, we have also
$$
\gcd(\alpha,\sigma(\alpha))=\gcd(9,13)=1\neq\alpha
$$
and
$$
\gcd(\alpha,\sigma(\alpha))\cdot d(\alpha)^2=1\cdot 3^2=9=\alpha
$$
At this stage, we notice that Theorem \ref{t2-3} can be applied. Let us verify that
it is well the case. We have
$$
T_e(n)=p^{\sigma(9)\cdot d(9)^2}_1\cdot p^{\sigma(9)\cdot d(9)^2}_2
\cdot p^{\sigma(9)\cdot d(9)^2}_3=p^{13\cdot 9}_1\cdot p^{13\cdot 9}_2
\cdot p^{13\cdot 9}_3=(p^9_1\cdot p^9_2\cdot p^9_3)^{13}=n^p
$$
So, for all primes $p_1$, $p_2$ and $p_3$, integers of the form
$p^9_1\cdot p^9_2\cdot p^9_3$ are $13$-multiplicatively $e$-perfect
numbers.
\end{example}

\begin{remark}
Let $n=p^{\alpha_1}_1\cdots p^{\alpha_r}_r$ with
$r\in\mathbb{N^\ast}$ be the prime factorisation
of an integer $n>1$ where $p_i$ with $i\in\llbracket 1,r\rrbracket$
are prime numbers and $\alpha_i\in\mathbb{N^\ast}$ for all
$i\in\llbracket 1,r\rrbracket$. If all $\alpha_i$ are primes, then we
have
$$
\sigma(\alpha_i)=\alpha_i+1,
$$
and
$$
d(\alpha_i)=2.
$$
Notice that in such a case, Theorem \ref{t2-3} cannot be applied since
$\sigma(\alpha_i)$ is not divisible by $\alpha_i$ for all
$i\in\llbracket 1,r\rrbracket$. From \eqref{lt} we have,
$$
T_e(n)=p^{(\alpha_1+1)\cdot 2^{r-1}}_1\cdots p^{(\alpha_r+1)\cdot 2^{r-1}}_r,
$$
and so
$$
T_e(n)=n{\displaystyle\prod^r_{i=1}}p^{2^{r-1}}_i.
$$
If $r=1$, then $T_e(n)=np_1=p^{1+\alpha_1}$. If $r\geq 2$ and
if $\alpha_i=2$ for all $i\in\llbracket 1,r\rrbracket$, then $n$ is a
perfect square and we have
$$
n=(p_1\cdots p_r)^2,
$$
and
$$
T_e(n)=n^{1+2^{r-2}}.
$$
In particular, if $r=2$, then $T_e(n)=n^2$ meaning that $n$ is
multiplicatively $e$-perfect number. 
\end{remark}

We now prove a result related to the bounds on the prime $p$ in Theorem \ref{t2-3}. For that we will need the following results.

\begin{theorem}[Nicolas and Robin \cite{nc}]\label{ul7}
For $n\geq 3$,
\begin{equation*}
 \frac{\log d(n)}{\log 2}\leq C_1\frac{\log n}{\log\log n}
\end{equation*} where $C_1=1.5379\cdots$ with equality for $n= 2^5\cdot 3^3\cdot 5^2\cdot 7\cdot 11\cdot 13\cdot 17\cdot 19$.
\end{theorem}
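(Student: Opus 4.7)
The plan is to establish the inequality by reducing the problem to a finite optimization over Ramanujan's class of superior highly composite numbers.

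Write $n = \prod_{i=1}^{r} p_i^{a_i}$ with $p_1 < p_2 < \cdots < p_r$ the primes dividing $n$ and $a_i \geq 1$, so $d(n) = \prod_{i=1}^{r}(a_i+1)$. Introduce
\[
F(n) = \frac{\log d(n)}{\log 2} \cdot \frac{\log \log n}{\log n},
\]
so the claim becomes $F(n) \leq C_1$ for all $n \geq 3$, with equality exactly at $n_0 = 2^5 \cdot 3^3 \cdot 5^2 \cdot 7 \cdot 11 \cdot 13 \cdot 17 \cdot 19$.

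The first step is a rearrangement argument: any $n$ that maximizes the ratio $d(n)/n^{\epsilon}$ for some auxiliary parameter $\epsilon > 0$ must have its exponents weakly decreasing along the sequence $2,3,5,\ldots$ of primes, since otherwise swapping exponents between two primes strictly improves the ratio. The extremal class that emerges is that of \emph{superior highly composite numbers}: integers $n_{\epsilon} = \prod_{p} p^{b_p(\epsilon)}$ where, for each prime $p$, the exponent $b_p(\epsilon)$ is the integer maximizing $(b+1)/p^{\epsilon b}$ over $b \geq 0$, which forces $b_p(\epsilon) = 0$ as soon as $p > 2^{1/\epsilon}$. For any $n$ and any $\epsilon > 0$ one has $d(n)/n^{\epsilon} \leq d(n_{\epsilon})/n_{\epsilon}^{\epsilon}$, so by choosing $\epsilon$ matched to $\log n$ the task of bounding $F(n)$ reduces to bounding $F(n_{\epsilon})$.

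Next, I would estimate $F(n_{\epsilon})$ asymptotically via Chebyshev's bound $\theta(y) = \sum_{p \leq y}\log p \asymp y$ and the prime-counting estimate $\pi(y) \sim y/\log y$ applied with $y = 2^{1/\epsilon}$. A short calculation using these yields $\log n_{\epsilon} \sim y$ and $\log d(n_{\epsilon}) \sim \pi(y)\log 2$, whence $F(n_{\epsilon}) \to 1$ as $\epsilon \to 0$. Consequently $F(n_{\epsilon}) > 1$ can occur only for $\epsilon$ in a bounded interval, and within that interval only finitely many transition values of $\epsilon$, where some $b_p(\epsilon)$ jumps by one, produce distinct candidate integers.

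The main obstacle is the final explicit check: one must enumerate the candidate superior highly composite numbers indexed by these transition values, evaluate $F$ at each, and verify that the supremum is attained precisely at $n_0$, whose exponent vector is $(5,3,2,1,1,1,1,1)$. This numerical but delicate verification pins down the constant $C_1 = 1.5379\ldots$ and simultaneously certifies that no other $n \geq 3$ realizes equality. Combined with the reduction from arbitrary $n$ to superior highly composite numbers established above, the bound for all $n \geq 3$ follows.
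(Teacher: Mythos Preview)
The paper does not contain a proof of this statement at all: Theorem~\ref{ul7} is quoted as a result of Nicolas and Robin \cite{nc} and is used as a black box in the proof of Theorem~\ref{ut3}. There is therefore no ``paper's own proof'' to compare your proposal against.

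That said, your sketch is a faithful outline of the method in the original Nicolas--Robin paper: reduce the supremum of $F(n)$ to the class of superior highly composite numbers via the multiplicative optimization $d(n)/n^{\epsilon}$, show $F(n_\epsilon)\to 1$ as $\epsilon\to 0$ using prime-counting estimates so that only finitely many candidates remain, and then carry out an explicit numerical check to locate the maximum at $n_0=2^5\cdot 3^3\cdot 5^2\cdot 7\cdot 11\cdot 13\cdot 17\cdot 19$. As a high-level plan it is correct; the substantive work, as you acknowledge, lies in making the reduction inequality effective (so that the finite search region is genuinely bounded) and in the careful enumeration and evaluation of the candidate superior highly composite numbers. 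None of that analysis appears in the present paper, which simply cites the result.
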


\begin{theorem}[\cite{sandor22}, p. 77]\label{ul1}
For any natural number $n\geq 3$, $\sigma(n)<n\sqrt{n}$.
\end{theorem}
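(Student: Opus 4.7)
The plan is to exploit multiplicativity: set $g(n) = \sigma(n)/n^{3/2}$, observe that $g$ is multiplicative (quotient of the multiplicative $\sigma$ and the completely multiplicative $n\mapsto n^{3/2}$), and note that $\sigma(n)<n\sqrt n$ is equivalent to $g(n)<1$. The whole problem then reduces to analysing $g$ on prime powers and combining factors.

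First I would compute $g(p^a) = (p^{a+1}-1)/\bigl((p-1)\,p^{3a/2}\bigr)$ and prove $g(p^a)<1$ for every prime power \emph{except} $p^a=2$. The supporting sub-claims are: (i) for fixed $p$, the ratio $g(p^{a+1})/g(p^a) = p^{-3/2}\bigl(p+(p-1)/(p^{a+1}-1)\bigr)$ is bounded above by $(p^2+p+1)/\bigl((p+1)p^{3/2}\bigr)$, which one checks is less than $1$ for every prime $p$, so $g(p^a)$ is strictly decreasing in $a$; (ii) the function $h(p)=(p+1)/p^{3/2}$ has negative derivative, hence $g(p)$ is decreasing in $p$; (iii) the boundary values $g(3)=4\sqrt 3/9$ and $g(4)=7/8$ lie below $1$. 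The only exceptional prime power is $g(2)=3\sqrt 2/4>1$.

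For general $n\geq 3$, I would write $n=2^b m$ with $m$ odd and split by $b$. When $b=0$ or $b\geq 2$, every prime-power factor of $n$ has $g<1$, so multiplicativity gives $g(n)<1$ at once. The delicate remaining case is $b=1$, i.e.\ $n=2m$ with odd $m\geq 3$: the factor $g(2)>1$ must be compensated by the deficit from $m$. Since the prime-power factors of $m$ all involve odd primes $\geq 3$, sub-claims (i) and (ii) give $g(q^c)\leq g(3)$ for every such factor, and a product of several numbers each strictly less than $1$ is strictly less than any single factor; hence $g(m)\leq g(3)$, with equality only at $m=3$. I would then conclude with the clean numerical check $g(2)\,g(3)=\sqrt 6/3<1$, which yields $g(n)\leq g(2)g(3)<1$.

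The main obstacle is exactly that last case $2\|n$: the naive multiplicative estimate contains the single factor $g(2)>1$, so the entire argument hinges on the pinch-point inequality $g(2)g(3)<1$ (equivalently $4\sqrt 3<6\sqrt 2$, i.e.\ $48<72$). The hypothesis $n\geq 3$ enters precisely here, to exclude $n=2$, which is the unique integer for which no compensating odd factor is available.
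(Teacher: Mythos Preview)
The paper does not prove this statement: Theorem~\ref{ul1} is quoted as a known inequality from \cite[p.~77]{sandor22} and is used solely as a black box in the proof of Theorem~\ref{ut3}. There is therefore no ``paper's proof'' to compare against.

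That said, your argument is correct. The multiplicativity of $g(n)=\sigma(n)/n^{3/2}$ reduces everything to prime powers; your monotonicity claims (i) and (ii) are right and the boundary checks $g(3)=4\sqrt3/9<1$, $g(4)=7/8<1$ dispose of all prime powers except $2$. The only substantive case is $n=2m$ with $m$ odd and $m\ge 3$, where you correctly compensate the bad factor $g(2)=3\sqrt2/4>1$ by the uniform bound $g(q^c)\le g(3)$ for odd prime powers, and close with $g(2)g(3)=\sqrt6/3<1$. One small wording issue: ``a product of several numbers each strictly less than $1$ is strictly less than any single factor'' is not literally true (it is at most any single factor); what you actually need, and have, is $g(m)\le g(3)$ because one factor is $\le g(3)$ and the remaining factors are $\le 1$. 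With that phrasing corrected the proof is complete.
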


\begin{theorem}\label{ut3}
Let $n=p_1^{a_1}p_2^{a_2}\cdots p_r^{a_r}$ be the prime factorization of integer $n$, where $p_i, a_i, i\in\mathbb{N}, a_i\geq3$ and let $n$ be a $k$-multiplicatively-$e$-perfect number. Then,
we have
\begin{equation*}
2^{r-1}<k <  \prod_{i=1}^r \left( a_i^{0.5+\frac{C(r-1)}{\log \log a_i}}\right)^{\frac{1}{r}},
\end{equation*}
\noindent where $C=C_1\log 2$ and $C_1=1.5379\cdots$.
\end{theorem}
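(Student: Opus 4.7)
The plan is to exploit the fact that $k$-multiplicative $e$-perfection of $n$, combined with formula \eqref{lt}, forces a system of equations linking $k$ to the functions $\sigma$ and $d$ applied to the exponents $a_i$. Specifically, writing $n^k=p_1^{ka_1}\cdots p_r^{ka_r}$ and matching exponents in \eqref{lt}, I would first derive that for every $i\in\llbracket 1,r\rrbracket$,
\[
\sigma(a_i)\prod_{j\neq i}d(a_j)=k\,a_i,\qquad\text{i.e.}\qquad k=\frac{\sigma(a_i)}{a_i}\prod_{j\neq i}d(a_j).
\]
Everything else is estimation of the right-hand side in two directions.

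For the lower bound I would argue as follows. Since $a_i\geq 3$, we have $\sigma(a_i)\geq a_i+1>a_i$, so $\sigma(a_i)/a_i>1$. Moreover, every $a_j\geq 3$ satisfies $d(a_j)\geq 2$, so $\prod_{j\neq i}d(a_j)\geq 2^{r-1}$. Combining these two strict inequalities in the formula for $k$ yields $k>2^{r-1}$.

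For the upper bound, the idea is to \emph{multiply} the $r$ identities $k=(\sigma(a_i)/a_i)\prod_{j\neq i}d(a_j)$ over $i=1,\dots,r$, which gives
\[
k^r=\Bigl(\prod_{i=1}^r\frac{\sigma(a_i)}{a_i}\Bigr)\Bigl(\prod_{i=1}^r\prod_{j\neq i}d(a_j)\Bigr)=\Bigl(\prod_{i=1}^r\frac{\sigma(a_i)}{a_i}\Bigr)\prod_{i=1}^r d(a_i)^{r-1},
\]
after the elementary observation that each $d(a_j)$ appears in exactly $r-1$ of the inner products. Now Theorem \ref{ul1} gives $\sigma(a_i)/a_i<a_i^{1/2}$ (valid because $a_i\geq 3$), and Theorem \ref{ul7} gives $\log d(a_i)\leq C_1(\log 2)\log a_i/\log\log a_i$, i.e.\ $d(a_i)\leq a_i^{C/\log\log a_i}$ with $C=C_1\log 2$. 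Substituting these two bounds and extracting an $r$-th root produces
\[
k<\prod_{i=1}^r\bigl(a_i^{\,1/2+C(r-1)/\log\log a_i}\bigr)^{1/r},
\]
which is exactly the claimed inequality.

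The only step that needs genuine care (rather than routine substitution) is the combinatorial identity $\prod_i\prod_{j\neq i}d(a_j)=\prod_j d(a_j)^{r-1}$, which must be tracked correctly so that the final exponent $C(r-1)/\log\log a_i$ has the right factor of $r-1$; everything else is a direct application of the two cited inequalities to the algebraic identity forced by \eqref{lt}.
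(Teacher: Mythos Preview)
Your proposal is correct and follows essentially the same approach as the paper: derive the $r$ exponent equations from \eqref{lt}, multiply them to obtain $k^r\prod a_i=\prod\sigma(a_i)\prod d(a_i)^{r-1}$, and then bound using Theorems \ref{ul7} and \ref{ul1}. The only minor difference is that for the lower bound you work directly from a single equation $k=(\sigma(a_i)/a_i)\prod_{j\neq i}d(a_j)$, whereas the paper first multiplies all $r$ equations and then bounds the product; your route is slightly more direct but the idea is identical.
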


\begin{proof}
As $n$ is a $k$-multiplicatively-$e$-perfect number, so $n^k=p_1^{a_1k}p_2^{a_2k}\cdots p_r^{a_rk}=T_e(n)$. By using \eqref{lt} we get
\begin{align*}
\sigma(a_1)d(a_2)\cdots d(a_r) &= a_1k,\\
\vdots & \\
d(a_1)d(a_2)\cdots \sigma(a_r) &= a_rk.\\
\end{align*}
Multiplying these $r$ equalities we get,
\begin{equation}\label{ue4}
\sigma(a_1)\sigma(a_2)\cdots\sigma(a_r)d(a_1)^{r-1}d(a_2)^{r-1}\cdots d(a_r)^{r-1}=a_1a_2\cdots a_rk^r.
\end{equation}

Now we proceed to prove that $k>2^{r-1}$. For that notice,  for any $a_i$, $\sigma(a_i)\geq (a_i+1)$ and $d(a_i)\geq 2$. Thus we get the following inequality,
\begin{equation*}
\sigma(a_1)\cdots\sigma(a_r)d(a_1)^{r-1}\cdots d(a_r)^{r-1}\geq (a_1+1)(a_2+1)\cdots (a_r+1)2^{r-1}\cdots 2^{r-1}
\end{equation*}
Substituting the right hand side of \eqref{ue4} in the above inequality we get, $$a_1a_2\cdots a_rk^r\geq (a_1+1)(a_2+1)\cdots (a_r+1)(2^{r-1})^r$$ which implies $$(k/2^{r-1})^r\geq \frac{(a_1+1)(a_2+1)\cdots (a_r+1)}{a_1a_2\cdots a_r}>1.$$ This 
gives us $k>2^{r-1}$.

Now we proceed to set the upper bound. By Theorem \ref{ul7} we have $d(a_i)\leq a_i^{\frac{C}{\log \log a_i}}$, where $C= C_1\log 2$. Hence
\begin{equation}\label{uenew}
 \prod_{i=1}^r d(a_i)^{r-1}\leq \prod_{i=1}^r \left(a_i^{\frac{C}{\log \log a_i}}\right)^{r-1}.
\end{equation}

\noindent Again, by an application of Theorem \ref{ul1} we get

\begin{equation}\label{uenew2}
 \prod_{i=1}^r \sigma (a_i) < \prod_{i=1}^r a_i \sqrt{a_i}.
\end{equation}

Now, using \eqref{ue4}, \eqref{uenew} and \eqref{uenew2} we get \[k< \prod_{i=1}^r \left( a_i^{0.5+\frac{C(r-1)}{\log \log a_i}}\right)^{\frac{1}{r}}.\]

\end{proof}

\begin{example}
Let $m=p^6$, where $p$ is a prime. Now consider another arbitrary prime $q$. Then $n$ can be $q$-multiplicatively-e-perfect only for the primes in the interval 
$2^{1-1}<q\leq \left(6^{0.5+\frac{C_1\log 2\cdot (1-1)}{\log \log 6}}\right)^{\frac{1}{1}}$. That is $1<q<\sqrt 6$.
\end{example} 

The bounds on $p$ mentioned in Theorem \ref{ut3} are not tight and there is further scope to work on such bounds of primes. Moreover notice that the bounds mentioned here requires complete prime factorization of integer $m$. Hence, bounds that doesn't requires prime factorization of $m$ would be more efficient.  But we do not discuss this direction in the present paper.

\section{{\texorpdfstring{$k$}{k}-\texorpdfstring{$\ktt$}{k-TT}-perfect numbers}}

A divisor $d$ of $n$ is said to be \textbf{unitary} if $\gcd(d, n/d)=1$. Let $T^\ast(n)$ be the product of unitary divisors of $n$. Bege \cite{bege} has studied the multiplicatively unitary perfect numbers and proved results very similar to S\'andor. Das and Saikia \cite{nntdm} introduced the concept of \textbf{$T^\ast T$-perfect numbers} which are numbers $n$ such that $T^\ast(n)T(n)=n^2$. They also introduced $k$-$T^\ast T$-perfect numbers and characterized both these classes of numbers. 
They further introduced the concept of $T^\ast_0T$-superperfect and $k$-$T^\ast_0T$-perfect numbers. A number $n$ is called a \textbf{$T^\ast_0T$-superperfect number} if $T^\ast(T(n))=n^2$ and it is called a \textbf{$k$-$T^\ast_0T$-perfect number} if $T^\ast(T(n))=n^k$ for $k\geq 2$. Das and Saikia \cite{nntdm} characterized these classes of numbers. They also introduced the \textbf{$k$-$\ktt$-perfect numbers} as the numbers $n$ such that $T(T^\ast(n))=n^k$ for $k\geq 2$. It is our aim in this section to characterize these $k$-$\ktt$-perfect numbers.

Let $n=p_1^{\alpha_1}\cdots p_r^{\alpha_r}$ be the prime factorization of $n>1$. Then the number of unitary divisors of $n$, $\tau^\ast (n)=2^r$ and $T^\ast (n)=n^{2^{r-1}}$. Das and Saikia \cite{nntdm} mentioned that for $k$-$\ktt$-perfect number we must have 

\begin{equation}\label{E3-1}
2^r(\alpha_1\cdot 2^{r-1}+1)\cdots (\alpha_r\cdot 2^{r-1}+1)=4k
\end{equation}

\noindent for $k\geq 2$. In the following results we characterize these class of numbers. Let $n=p^{\alpha_1}_1\cdots p^{\alpha_r}_r$ with
$r\in\mathbb{N^\ast}$ be the prime factorization
of an integer $n>1$ where $p_i$ with $i\in\llbracket 1,r\rrbracket$
are prime numbers and $\alpha_i\in\mathbb{N^\ast}$ for all
$i\in\llbracket 1,r\rrbracket$.

\begin{theorem}\label{t3-1}
\hspace{2em}
\begin{enumerate}
\item All $2$-$T_0T^{\ast}$-perfect numbers have the form $n=p^3_1$;
\item All $3$-$T_0T^{\ast}$-perfect numbers have the form $n=p^5_1$;
\item All $4$-$T_0T^{\ast}$-perfect numbers have the form $n=p^7_1$;
\item All $5$-$T_0T^{\ast}$-perfect numbers have the form $n=p^9_1$;
\item All $6$-$T_0T^{\ast}$-perfect numbers have the form $n=p^{11}_1$;
\item All $7$-$T_0T^{\ast}$-perfect numbers have the form $n=p^{13}_1$;
\item All $8$-$T_0T^{\ast}$-perfect numbers have the form $n=p^{15}_1$;
\item All $9$-$T_0T^{\ast}$-perfect numbers have the form $n=p^{17}_1$
or $n=p_1p_2$;
\item All $10$-$T_0T^{\ast}$-perfect numbers have the form $n=p^{19}_1$;
\end{enumerate}
\end{theorem}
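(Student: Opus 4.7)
The plan is to use equation \eqref{E3-1} directly as the defining condition: $n = p_1^{\alpha_1}\cdots p_r^{\alpha_r}$ is $k$-$T_0T^{\ast}$-perfect if and only if
$$2^r(\alpha_1\cdot 2^{r-1}+1)\cdots(\alpha_r\cdot 2^{r-1}+1)=4k,$$
and then to do a case analysis on the number $r$ of distinct prime factors, with $\alpha_i\geq 1$ throughout.

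First I would dispatch $r=1$. Here \eqref{E3-1} reduces to $2(\alpha_1+1)=4k$, equivalent to $\alpha_1=2k-1$. Thus for each $k\in\{2,\ldots,10\}$ the single-prime shape $n=p^{2k-1}$ always occurs, which produces the first alternative in every one of the nine cases of the theorem.

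Next I would handle $r=2$. Equation \eqref{E3-1} becomes $4(2\alpha_1+1)(2\alpha_2+1)=4k$, i.e.
$$(2\alpha_1+1)(2\alpha_2+1)=k.$$
Since $\alpha_1,\alpha_2\geq 1$, each factor is odd and at least $3$, so $k\geq 9$; moreover $k$ must itself be odd. Among $k\in\{2,\ldots,10\}$ the only possibility is $k=9=3\cdot 3$, forcing $\alpha_1=\alpha_2=1$ and hence the extra shape $n=p_1p_2$ in item (8). For all the other values of $k$ in the range, the $r=2$ case yields no solutions, so no additional shape appears.

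Finally I would rule out $r\geq 3$ by a crude lower bound: with $\alpha_i\geq 1$,
$$2^r(\alpha_1\cdot 2^{r-1}+1)\cdots(\alpha_r\cdot 2^{r-1}+1)\geq 2^r(2^{r-1}+1)^r,$$
which for $r=3$ already equals $8\cdot 5^3=1000>40\geq 4k$, and the bound only grows with $r$. Hence no solution with $r\geq 3$ contributes for $k\leq 10$. Combining the three cases gives precisely the list in the statement. I do not foresee a genuine obstacle here; the content is purely a finite diophantine check, and the only mildly delicate point is noting that for $r=2$ both factors must be at least $3$, which is what confines the two-prime family to $k=9$ within the stated range.
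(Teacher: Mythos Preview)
Your proof is correct and follows essentially the same approach as the paper: both use equation \eqref{E3-1} and split into cases according to the number $r$ of prime factors. The only cosmetic differences are that the paper treats each value of $k$ separately (working out $k=2,3,9$ in full and leaving the rest to the reader) and eliminates $r\geq 3$ mainly via divisibility by $2^{r-2}$, whereas you handle all $k\leq 10$ at once and dispose of $r\geq 3$ with the uniform size bound $2^r(2^{r-1}+1)^r\geq 1000$; your version is a bit more streamlined but the substance is the same.
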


\begin{proof}

We will examine in detail the cases where $k=2,3,9$ leaving to the
reader the task to verify the other statements as the proofs are similar.

We first prove that all $2$-$T_0T^{\ast}$-perfect numbers have the
form $n=p^3_1$. In the following, we will investigate the different subcases
beginning from $r=1$.
\begin{enumerate}[label=$\bullet$]
\item $r=1$: (\ref{E3-1}) becomes $2\cdot(\alpha_1+1)=8$; it
  gives $\alpha_1=3$.
\item $r=2$: (\ref{E3-1}) becomes
  $4\cdot(2\alpha_1+1)\cdot(2\alpha_2+1)=8$ which is equivalent 
  to $(2\alpha_1+1)\cdot(2\alpha_2+1)=2$; it is not possible since
  $(2\alpha_1+1)\cdot(2\alpha_2+1)$ is odd whereas $2$ is even.
  \item $r=3$: (\ref{E3-1}) becomes
  $8\cdot(4\alpha_1+1)\cdot(4\alpha_2+1)\cdot(4\alpha_3+1)=8$ which is
  equivalent to $(4\alpha_1+1)\cdot(4\alpha_2+1)\cdot(4\alpha_3+1)=1$;
  it is not possible 
  since $\alpha_1,\alpha_2,\alpha_3\geq 1$ imply that
  $(4\alpha_1+1)\cdot(4\alpha_2+1)\cdot(4\alpha_3+1)\geq 125$.
\item $r\geq 4$: (\ref{E3-1}) becomes
  $2^{r-3}\cdot(\alpha_1\cdot 2^{r-1}+1)\cdots(\alpha_r\cdot
  2^{r-1}+1)=1$ which is not possible since 
  $2^{r-3}\!\not|1$ for $r\geq 4$.
\end{enumerate}
So, only the subcase where $r=1$ is valid, which means that if $k=2$,
then $n=p^3_1$.

Secondly, we now prove that all $3$-$T_0T^{\ast}$-perfect numbers have the
form $n=p^5_1$. In the following, we will investigate the different subcases
beginning from $r=1$.
\begin{enumerate}[label=$\bullet$]
\item $r=1$: (\ref{E3-1}) becomes $2\cdot(\alpha_1+1)=12$; it
  gives $\alpha_1=5$.
\item $r=2$: (\ref{E3-1}) becomes
  $4\cdot(2\alpha_1+1)\cdot(2\alpha_2+1)=12$ which is equivalent 
  to $(2\alpha_1+1)\cdot(2\alpha_2+1)=3$; it is not possible since
  for $\alpha_1,\alpha_2\geq 1$, we have
  $(2\alpha_1+1)\cdot(2\alpha_2+1)\geq 9$.
\item $r\geq 3$: (\ref{E3-1}) becomes
  $2^{r-2}\cdot(\alpha_1\cdot 2^{r-1}+1)\cdots(\alpha_r\cdot
  2^{r-1}+1)=3$ which is not possible since $2^{r-2}\!\not|3$ for $r\geq 3$.
\end{enumerate}
So, only the subcase where $r=1$ is valid, which means that if $k=3$,
then $n=p^5_1$.

Third, we prove that all $9$-$T_0T^{\ast}$-perfect numbers have the
form $n=p^{17}_1$ or $n=p_1p_2$. In the following, we will investigate
the different subcases beginning from $r=1$.
\begin{enumerate}[label=$\bullet$]
\item $r=1$: (\ref{E3-1}) becomes $2\cdot(\alpha_1+1)=36$; it
  gives $\alpha_1=17$.
\item $r=2$: (\ref{E3-1}) becomes
  $4\cdot(2\alpha_1+1)\cdot(2\alpha_2+1)=36$ which is equivalent 
  to $(2\alpha_1+1)\cdot(2\alpha_2+1)=9$; there is a trivial solution which
  is obtained when $\alpha_1=\alpha_2=1$, if at least one of the
  integers among the integers $\alpha_1$ and $\alpha_2$ is greater than $2$,
  then $(2\alpha_1+1)\cdot(2\alpha_2+1)>9$ implying that there is no other
  solution.
\item $r\geq 3$: (\ref{E3-1}) becomes
  $2^{r-2}\cdot(\alpha_1\cdot 2^{r-1}+1)\cdots(\alpha_r\cdot
  2^{r-1}+1)=9$ which is not possible since $2^{r-2}\!\not|9$ for
  $r\geq 3$.
\end{enumerate}
So, only the subcases where $r=1$ and $r=2$ with
$\alpha_1=\alpha_2=1$ is valid, which means that if $k=9$, then either $n=p^{17}_1$
or $n=p_1p_2$. 
\end{proof}

\begin{theorem}\label{t3-2}
All $p$-$T_0T^{\ast}$-perfect numbers for a prime $p$ have the form
$n=p^{2p-1}_1$.
\end{theorem}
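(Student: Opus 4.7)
The plan is to mimic the proof of Theorem \ref{t3-1}: apply the characterising equation (\ref{E3-1}) with $k=p$ and do a case analysis on the number $r$ of distinct prime factors of $n$. Writing $n=p_1^{\alpha_1}\cdots p_r^{\alpha_r}$, the equation to be solved is
$$
2^r\prod_{i=1}^r(\alpha_i\cdot 2^{r-1}+1)=4p.
$$

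For $r=1$ this collapses to $2(\alpha_1+1)=4p$, whence $\alpha_1=2p-1$, giving the asserted form $n=p_1^{2p-1}$.

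I would then rule out all $r\geq 2$. For $r=2$ the equation is $(2\alpha_1+1)(2\alpha_2+1)=p$; each factor on the left is odd and at least $3$ (since $\alpha_i\geq 1$), so the product is at least $9$ and is a nontrivial product, contradicting the primality of $p$. For $r\geq 3$ I would rewrite the equation as
$$
2^{r-2}\prod_{i=1}^r(\alpha_i\cdot 2^{r-1}+1)=p,
$$
in which every factor of the product is odd and at least $2^{r-1}+1\geq 5$. If $p$ is odd, the presence of $2^{r-2}\geq 2$ on the left already gives a contradiction. If $p=2$, then one needs $2^{r-2}\leq 2$ together with the odd product equal to $1$, which again fails since each factor is at least $5$; in any case $p=2$ reduces to $\alpha_1=3$, consistent with item (1) of Theorem \ref{t3-1}.

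The argument is entirely routine casework of the same flavour as the proof of Theorem \ref{t3-1}; there is no genuine obstacle beyond simultaneously handling both possible parities of $p$ in the case $r\geq 3$, and this is immediate from the coexistence of the factor $2^{r-2}$ and a rapidly growing product of odd integers on the left against a single prime on the right.
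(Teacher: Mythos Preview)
Your proof is correct and follows essentially the same case analysis on $r$ as the paper's proof of Theorem \ref{t3-2}. Your handling of the case $r=2$ (noting that a product of two integers each at least $3$ cannot be prime) is slightly cleaner than the paper's, and you are more explicit about the subcase $p=2$ when $r\geq 3$, which the paper glosses over; otherwise the arguments coincide.
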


\begin{proof}

According to (\ref{E3-1}), we must solve the equation
\be
\label{E3-2}
2^r\cdot(\alpha_1\cdot 2^{r-1}+1)\cdots(\alpha_r\cdot 2^{r-1}+1)=4p.
\ee
In the following, we will investigate the different cases beginning
from $r=1$. 
\begin{enumerate}[label=$\bullet$]
\item $r=1$: (\ref{E3-2}) becomes $2\cdot(\alpha_1+1)=4p$; it
  gives $\alpha_1=2p-1$.
\item $r=2$: (\ref{E3-2}) becomes
  $4\cdot(2\alpha_1+1)\cdot(2\alpha_2+1)=4p$; it gives 
  $(2\alpha_1+1)(2\alpha_2+1)=p$; notice that the conditions
  $\alpha_1,\alpha_2\geq 1$ imply that $(2\alpha_1+1)(2\alpha_2+1)\geq
  9$ and so in such a case, $p$ must be necessarily an odd prime
  number; notice also that if one of the numbers among the numbers
  $2\alpha_1+1,2\alpha_2+1$ is equal to $p$, it implies that the other
  number among the numbers $2\alpha_1+1,2\alpha_2+1$ is equal to $1$
  implying that one of the numbers among the numbers
  $\alpha_1,\alpha_2$ would be zero,
  which is not compatible with the conditions $\alpha_1,\alpha_2\geq
  1$; therefore, this case is not possible due to the fact that $p$ is
  a prime number which is squarefree. 
\item $r\geq 3$: (\ref{E3-2}) becomes
  $2^{r-2}\cdot(\alpha_1\cdot 2^{r-1}+1)\cdots(\alpha_r\cdot
  2^{r-1}+1)=p$ which is not possible since
  $2^{r-2}\!\not|p$ for $r\geq 3$ for odd prime $p$.
\end{enumerate}
So, only the case where $r=1$ is valid for which $n=p^{2p-1}_1$.
\end{proof}

\begin{theorem}\label{t3-22}
Let $k$ be a non-zero positive integer. Then we have the following:

\begin{enumerate}
    \item For any prime $p$, the integers whose prime decompositions have the form $p^{2p^k-1}_1$ are $p^k$-$T_0T^{\ast}$-perfect numbers.
    \item If $k\geq 2$, then for any odd prime $p$, the integers whose prime decompositions have the form $p^{(p^{a_1}-1)/2}_1\cdot p^{(p^{a_2}-1)/2}_2$ where $a_1,a_2\in\mathbb{N}^{\ast}$ such that $a_1+a_2=k$, are $p^k$-$T_0T^{\ast}$-perfect numbers.
    \item Any integer which doesn't have prime decomposition as $p^{2p^k-1}_1$ for prime $p$ or $p^{(p^{a_1}-1)/2}_1\cdot p^{(p^{a_2}-1)/2}_2$ for odd prime $p$, where $a_1,a_2\in\mathbb{N}^{\ast}$ such that $a_1+a_2=k$, are not $p^k$-$T_0T^{\ast}$-perfect numbers.
\end{enumerate}
  
\end{theorem}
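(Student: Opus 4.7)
The plan is to reduce everything to the single Diophantine equation
\[
2^r\cdot(\alpha_1\cdot 2^{r-1}+1)\cdots(\alpha_r\cdot 2^{r-1}+1)=4p^k
\]
coming from \eqref{E3-1} with $4k$ replaced by $4p^k$, and then mimic the case analysis on $r$ used in Theorems \ref{t3-1} and \ref{t3-2}. Parts (1) and (2) of the statement are sufficiency claims, and I would handle them first by direct substitution: for (1), plugging $r=1$ and $\alpha_1=2p^k-1$ into the left-hand side gives $2\cdot 2p^k=4p^k$; for (2), plugging $r=2$ and $\alpha_i=(p^{a_i}-1)/2$ gives $4\cdot p^{a_1}\cdot p^{a_2}=4p^{a_1+a_2}=4p^k$. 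Note that $(p^{a_i}-1)/2$ is a positive integer precisely because $p$ is odd and $a_i\geq 1$, and the two constraints $a_i\geq 1$ together with $a_1+a_2=k$ force $k\geq 2$, which is exactly the hypothesis of part (2).

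For part (3), I case-split on $r$. The case $r=1$ gives $2(\alpha_1+1)=4p^k$, hence $\alpha_1=2p^k-1$, producing exactly the form in (1). The case $r=2$ reduces to
\[
(2\alpha_1+1)(2\alpha_2+1)=p^k.
\]
If $p=2$ the left-hand side is odd while the right-hand side is even, contradiction; hence $p$ must be odd. By unique factorization each factor must itself be a power of $p$, so $2\alpha_i+1=p^{a_i}$ with $a_1+a_2=k$; the condition $\alpha_i\geq 1$ forces $p^{a_i}\geq 3$ and thus $a_i\geq 1$, which again yields $k\geq 2$ and produces exactly the form in (2). For $r\geq 3$ the equation rewrites as
\[
2^{r-2}\cdot(\alpha_1\cdot 2^{r-1}+1)\cdots(\alpha_r\cdot 2^{r-1}+1)=p^k.
\]
For odd $p$ the factor $2^{r-2}$ on the left has no matching factor on the right, a contradiction. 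For $p=2$ each factor $\alpha_i\cdot 2^{r-1}+1$ is odd and at least $2^{r-1}+1\geq 5$, so the odd part of the left-hand side is at least $5^r>1$, whereas the odd part of $2^k$ is $1$, again a contradiction. Hence no $p^k$-$T_0T^{\ast}$-perfect number arises for $r\geq 3$, and the three cases exhaust all possibilities.

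The main obstacle is mostly bookkeeping rather than depth: one has to check carefully that in the $r=2$, $p$ odd subcase both exponents $a_1,a_2$ must be strictly positive (so that neither $\alpha_i$ vanishes and so that the hypothesis $k\geq 2$ is necessary), and also that the $p=2$ degenerate possibilities are ruled out by parity at each stage. Apart from this, the proof is structurally identical to Theorem \ref{t3-2}, with the only substantive new ingredient being the use of unique factorization of $p^k$ into two odd factors to produce the biprime family in (2).
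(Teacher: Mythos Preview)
Your proof is correct and follows essentially the same route as the paper: reduce to the equation $2^r\prod_i(\alpha_i 2^{r-1}+1)=4p^k$ from \eqref{E3-1} and case-split on $r$, handling $r=1$, $r=2$, and $r\geq 3$ exactly as done there. Your treatment of the $r\geq 3$, $p=2$ subcase via comparing odd parts is slightly cleaner than the paper's phrasing, and your explicit separation of the sufficiency checks for (1) and (2) from the necessity argument in (3) is a clear expository improvement, but the mathematical content is the same.
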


\begin{proof}
According to Equation (\ref{E3-1}), we must solve
\begin{equation}\label{E3-22}
2^r\cdot(\alpha_1\cdot 2^{r-1}+1)\cdots(\alpha_r\cdot 2^{r-1}+1)=4p^k
\end{equation}
Next, we will examine the different cases beginning from $r=1$.
\begin{enumerate}[label=$\bullet$]
\item $r=1$: (\ref{E3-22}) becomes $2\cdot(\alpha_1+1)=4p^k$; it gives $\alpha_1=2p^k-1$, which proves the first statement.
\item $r=2$: (\ref{E3-22}) becomes $4\cdot(2\alpha_1+1)\cdot(2\alpha_2+1)=4p^k$; it gives $(2\alpha_1+1)\cdot(2\alpha_2+1)=p^k$; in this case, since $(2\alpha_1+1)\cdot(2\alpha_2+1)$ is odd whatever nonzero positive integers $\alpha_1,\alpha_2$ are, the prime $p$ must be odd; if $k=1$, then we have $(2\alpha_1+1)\cdot(2\alpha_2+1)=p$ which is not possible for $\alpha_1,\alpha_2\geq 1$ since $p$ is a prime number which is squarefee; if $k\geq 2$, then according to the fundamental theorem of arithmetic, since $2\alpha_i+1\geq 3$ with $\alpha_i\geq 1$ for $i=1,2$, the equation $(2\alpha_1+1)\cdot(2\alpha_2+1)=p^k$ implies that there exists two nonzero positive integers $a_1,a_2$ such that $2\alpha_1+1=p^{a_1}$ and $2\alpha_2+1=p^{a_2}$ and $a_1+a_2=k$, which proves the second statement.
\item $r\geq 3$: (\ref{E3-22}) becomes $2^{r-2}\cdot(\alpha_1\cdot 2^{r-1}+1)\cdots(\alpha_r\cdot 2^{r-1}+1)=p^k$; in this case, $p^k$ must be divisible by $2$ and since $p$ is prime, it entails that $p=2$; but then $\alpha_i\cdot 2^{r-1}+1$ for all $i\in\llbracket 1,r\rrbracket$ is divisible by $2$; that is impossible since $\alpha_i\cdot 2^{r-1}+1$ for all $i\in\llbracket 1,r\rrbracket$ is odd, which proves the third statement.
\end{enumerate}

\end{proof}

\begin{remark}\label{r3-22}
Let $k$ be an integer which is greater than $2$ and let $p$ be an odd prime number.
According to Theorem \ref{t3-22}, the integers of the form $p^{(p^{a}-1)/2}_1\cdot p^{(p^{k-a}-1)/2}_2$ with $a\in\llbracket 1,k-1\rrbracket$ are $p^k$-$T_0T^{\ast}$-perfect numbers.
\end{remark}

\begin{corollary}\label{c3-22}
\hspace{2em}
\begin{enumerate}
    \item For any prime $p$, the integers whose prime decompositions have the form $p^{2p^2-1}_1$, are $p^2$-$T_0T^{\ast}$-perfect numbers. 
    \item If $p$ is an odd prime number, then the integers whose prime decompositions have the form $p^{(p-1)/2}_1\cdot p^{(p-1)/2}_2$, are $p^2$-$T_0T^{\ast}$-perfect numbers.
    \item Any integer which doesn't have prime decomposition as $p^{2p^2-1}_1$ for a prime $p$ or $p^{(p-1)/2}_1\cdot p^{(p-1)/2}_2$ for an odd prime $p$, are not $p^2$-$T_0T^{\ast}$-perfect numbers.
\end{enumerate}
 
\end{corollary}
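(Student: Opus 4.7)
The plan is to obtain Corollary \ref{c3-22} as an immediate specialization of Theorem \ref{t3-22} to the case $k=2$, so no new equation-solving is required; the corollary is really just a readable unpacking of what Theorem \ref{t3-22} says when the exponent on $p$ is $2$.

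First, for part (1), I would apply Theorem \ref{t3-22}(1) directly with $k$ set to $2$. Substituting gives the exponent $2p^{k}-1 = 2p^{2}-1$, and the conclusion is that $n = p_{1}^{2p^{2}-1}$ is $p^{2}$-$T_{0}T^{\ast}$-perfect for every prime $p$. This part requires no further argument.

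For part (2), I would invoke Theorem \ref{t3-22}(2) with $k=2$. That theorem demands $a_{1}, a_{2} \in \mathbb{N}^{\ast}$ (so $a_{1}, a_{2} \geq 1$) with $a_{1}+a_{2}=k=2$. The only decomposition is $a_{1}=a_{2}=1$, which forces the exponents to be $(p^{a_{i}}-1)/2 = (p-1)/2$. Hence the integers produced by Theorem \ref{t3-22}(2) in this case are exactly those of the form $p_{1}^{(p-1)/2} \cdot p_{2}^{(p-1)/2}$ for an odd prime $p$, as claimed; the restriction to odd $p$ is inherited from Theorem \ref{t3-22}(2).

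Part (3) is then an immediate restatement of Theorem \ref{t3-22}(3) with $k=2$: since the two forms in parts (1) and (2) exhaust all possibilities produced by the theorem, any integer whose prime decomposition is neither $p_{1}^{2p^{2}-1}$ (for some prime $p$) nor $p_{1}^{(p-1)/2} \cdot p_{2}^{(p-1)/2}$ (for some odd prime $p$) cannot be $p^{2}$-$T_{0}T^{\ast}$-perfect. There is essentially no obstacle here; the only bookkeeping point worth noting is that the convention $a_{1}, a_{2} \geq 1$ reduces the Diophantine condition $a_{1}+a_{2}=2$ to the single solution $a_{1}=a_{2}=1$, which is what collapses the two-prime family in the corollary down to a single canonical form.
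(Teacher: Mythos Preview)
Your proposal is correct and follows essentially the same approach as the paper: the paper also derives the corollary as a direct specialization of Theorem \ref{t3-22} to $k=2$, noting for part (2) (via Remark \ref{r3-22}) that the constraint $1\leq a < 2$ forces $a=1$, which is exactly your observation that $a_1+a_2=2$ with $a_1,a_2\geq 1$ has the unique solution $a_1=a_2=1$.
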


The first statement of Corollary \ref{c3-22}, follows directly from Theorem \ref{t3-22}. The second statement of Corollary \ref{c3-22}, follows from Remark \ref{r3-22} when $k=2$ and so $n=p^{(p^{a}-1)/2}_1\cdot p^{(p^{2-a}-1)/2}_2$ with $a$ a nonzero positive integers which verifies the condition $1\leq a<2$. In this case, there is only one possibility, namely $a=1$.

\begin{corollary}\label{c3-23}
Let $k$ be a nonzero positive integer. All $2^k$-$T_0T^{\ast}$-perfect numbers have the form $p^{2^{k+1}-1}_1$.
\end{corollary}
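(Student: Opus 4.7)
The plan is to specialize Theorem \ref{t3-22} to the prime $p=2$, and verify that among the three cases listed there only the first can possibly contribute a solution. Setting $p=2$ in the first statement of Theorem \ref{t3-22} gives $\alpha_1 = 2\cdot 2^k - 1 = 2^{k+1}-1$, so $n = p_1^{2^{k+1}-1}$ is indeed a $2^k$-$T_0T^\ast$-perfect number. This gives existence and the claimed form; the remaining work is to show no other shape is possible.

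Next I would rule out the $r=2$ case. The second statement of Theorem \ref{t3-22} is explicitly restricted to \emph{odd} primes $p$, so it cannot be invoked when $p=2$. To see directly why, recall from the proof of Theorem \ref{t3-22} that when $r=2$ equation (\ref{E3-22}) reduces to
\[
(2\alpha_1+1)(2\alpha_2+1)=p^k.
\]
With $p=2$ and $k\geq 1$, the right-hand side is even, but the left-hand side is a product of odd integers; contradiction. Hence no $2^k$-$T_0T^\ast$-perfect number has exactly two distinct prime divisors.

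Finally I would dispose of $r\geq 3$ by quoting the third item of Theorem \ref{t3-22}, whose argument for $p=2$ proceeds as follows: equation (\ref{E3-22}) becomes
\[
2^{r-2}\cdot(\alpha_1\cdot 2^{r-1}+1)\cdots(\alpha_r\cdot 2^{r-1}+1)=2^k,
\]
and each factor $\alpha_i\cdot 2^{r-1}+1$ is odd because $r\geq 3$ forces $2^{r-1}$ to be even. So the odd part of the left side equals $1$, forcing each $\alpha_i\cdot 2^{r-1}+1 = 1$, which is impossible for $\alpha_i\geq 1$. Thus only $r=1$ survives, yielding exactly $n=p_1^{2^{k+1}-1}$.

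There is no real obstacle here beyond bookkeeping: the corollary is a direct specialization of Theorem \ref{t3-22} to the prime $p=2$, with the second case vacuous and the third case immediately contradicted by a parity argument. The only subtlety is to note that the Theorem's second statement requires $p$ odd, so its exclusion for $p=2$ must be justified by the parity remark above rather than by citation.
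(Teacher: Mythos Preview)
Your proof is correct and follows exactly the paper's approach: the paper simply states that Corollary \ref{c3-23} ``follows directly from Theorem \ref{t3-22},'' and your argument spells out precisely why the specialization $p=2$ leaves only the $r=1$ case. Your parity justifications for eliminating $r=2$ and $r\geq 3$ are accurate and amount to an explicit unpacking of what the paper leaves implicit.
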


\noindent Corollary \ref{c3-23} follows directly from Theorem \ref{t3-22}.

\begin{theorem}\label{t3-23}
Let $k$ be an odd positive integer which is not prime. All $k$-$T_0T^{\ast}$-perfect numbers have the form $p^{2k-1}_1$ or $p^{(d-1)/2}_1\cdot p^{(k/d-1)/2}_2$ where $d$ is a positive proper divisor of $k$ which satisfies the inequalities $2<d<k$.
\end{theorem}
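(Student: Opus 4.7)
The strategy is the same case analysis by $r$ used in the proofs of Theorems \ref{t3-1}, \ref{t3-2} and \ref{t3-22}, specialized to the assumption that $k$ is odd and composite. Starting from equation (\ref{E3-1}),
\[
2^r\cdot(\alpha_1\cdot 2^{r-1}+1)\cdots(\alpha_r\cdot 2^{r-1}+1)=4k,
\]
I will split on the number $r$ of distinct prime factors of $n$, and in each case compare 2-adic valuations and then compare the odd parts.

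For $r=1$ the equation collapses to $2(\alpha_1+1)=4k$, giving immediately $\alpha_1=2k-1$ and hence the solution $n=p_1^{2k-1}$. For $r=2$, dividing out the factor $4$ yields $(2\alpha_1+1)(2\alpha_2+1)=k$. Since $\alpha_i\geq 1$ each factor $2\alpha_i+1$ is an odd integer at least $3$, so any solution corresponds to a factorization $k=ab$ with $a,b\geq 3$ odd. Writing $a=d$ and $b=k/d$, where $d\mid k$, the constraint $a,b\geq 3$ is equivalent to $d\neq 1,k$, i.e.\ to $d$ being a positive proper divisor of $k$ with $2<d<k$ (and automatically $k/d\geq 3$ since $k/d$ is an odd divisor of $k$ different from $1$). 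Conversely, for any such $d$ the integers $\alpha_1=(d-1)/2$ and $\alpha_2=(k/d-1)/2$ are positive integers (because $d$ and $k/d$ are odd) and satisfy the equation, so they produce valid $k$-$T_0T^{\ast}$-perfect numbers of the form $p_1^{(d-1)/2}p_2^{(k/d-1)/2}$. The hypothesis that $k$ is composite (not prime) is exactly what guarantees that such a divisor $d$ exists; if $k$ were prime, the $r=2$ branch would be empty, as in Theorem \ref{t3-2}.

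For $r\geq 3$ the argument is a clean $2$-adic obstruction: the product $(\alpha_1\cdot 2^{r-1}+1)\cdots(\alpha_r\cdot 2^{r-1}+1)$ is odd, so the left side of the equation has $2$-adic valuation exactly $r\geq 3$, whereas $4k$ has $2$-adic valuation $2$ because $k$ is odd. This contradiction rules out every $r\geq 3$.

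The only mildly delicate point is the bookkeeping in the $r=2$ case: I must make sure that the parametrization by proper divisors $d$ with $2<d<k$ produces each solution exactly in the form stated, without double-counting or missing the ``symmetric'' factorization obtained by swapping $d$ and $k/d$. Since the theorem's statement allows $d$ to range over all proper divisors in the open interval $(2,k)$, both members of the pair $\{d,k/d\}$ appear and simply correspond to the two orderings of the exponents on $p_1$ and $p_2$; this is harmless because $p_1$ and $p_2$ are unlabelled primes. Apart from this mild redundancy, the classification follows directly from the three case analyses above.
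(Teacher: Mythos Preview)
Your proof is correct and follows essentially the same approach as the paper's: a case analysis on $r$ using equation (\ref{E3-1}), with the $r=1$ case yielding $\alpha_1=2k-1$, the $r=2$ case reducing to $(2\alpha_1+1)(2\alpha_2+1)=k$ and hence to proper divisors $d$ of $k$ with $2<d<k$, and the $r\geq 3$ cases ruled out by parity. The only cosmetic difference is that in the $r=2$ branch the paper spells out the prime factorization $k=q_1^{\beta_1}\cdots q_s^{\beta_s}$ explicitly before arriving at the divisor parametrization, whereas you pass directly to factorizations $k=d\cdot(k/d)$; the content is identical.
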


\begin{proof}
According to Equation (\ref{E3-1}), we must solve
\begin{equation}\label{E3-23}
2^r\cdot(\alpha_1\cdot 2^{r-1}+1)\cdots(\alpha_r\cdot 2^{r-1}+1)=4k.
\end{equation}
Next, we will examine the different cases beginning from $r=1$.
\begin{enumerate}[label=$\bullet$]
\item $r=1$: (\ref{E3-23}) becomes $2\cdot(\alpha_1+1)=4k$; it gives $\alpha_1=2k-1$.
\item $r=2$: (\ref{E3-23}) becomes $4\cdot(2\alpha_1+1)\cdot(2\alpha_2+1)=4k$; it gives $(2\alpha_1+1)\cdot(2\alpha_2+1)=k$; in this case, let $k=q^{\beta_1}_1\cdots q^{\beta_s}_s$ be the prime decomposition of the odd integer $k$ where $q_i$ is an odd prime for all $i\in\llbracket 1,s\rrbracket$ with $s\in\mathbb{N}^{\ast}$ and $\beta_i\in\mathbb{N}^{\ast}$ for all $i\in\llbracket 1,s\rrbracket$ with $s\in\mathbb{N}^{\ast}$; according to the fundamental theorem of arithmetic, we have $2\alpha_1+1=q^{\gamma_1}_1\cdots q^{\gamma_s}_s$ and $2\alpha_2+1=q^{\delta_1}_1\cdots q^{\delta_s}_s$ where for all $i\in\llbracket 1,s\rrbracket$ with $s\in\mathbb{N}^{\ast}$, $\gamma_i,\delta_i\in\mathbb{N}$ such that $\gamma_i+\delta_i=\beta_i$; since $\alpha_1,\alpha_2\geq 1$ ($r=2$), there exists at least one integer among the integers $\gamma_i$ ($i=1,\ldots,s$) which is greater than $1$ and there exists at least one integer among the integers $\delta_i$ ($i=1,\ldots,s$) which is greater than $1$; we must have $2<q^{\gamma_1}_1\cdots q^{\gamma_s}_s<k$ and $2<q^{\delta_1}_1\cdots q^{\delta_s}_s<k$; notice that we have $q^{\delta_1}_1\cdots q^{\delta_s}_s=k/(q^{\gamma_1}_1\cdots q^{\gamma_s}_s)$ which follows from the equation $(2\alpha_1+1)\cdot(2\alpha_2+1)=k$; in the following, we set $d=q^{\gamma_1}_1\cdots q^{\gamma_s}_s$ and so $q^{\delta_1}_1\cdots q^{\delta_s}_s=k/d$; accordingly, $d$ is a positive proper divisor of $k$, $2\alpha_1+1=d$ and $2\alpha_2+1=k/d$; it gives $\alpha_1=(d-1)/2$ and $\alpha_2=(k/d-1)/2$.
\item $r\geq 3$: (\ref{E3-23}) becomes $2^{r-2}\cdot(\alpha_1\cdot 2^{r-1}+1)\cdots(\alpha_r\cdot 2^{r-1}+1)=k$; in this case, $k$ must be divisible by $2$; which is impossible since $k$ is odd. 
\end{enumerate}
So, only the cases where $r=1$ and $r=2$ are possible. This completes the proof.
\end{proof}

\begin{remark}
We can notice that in Theorem \ref{t3-23}, if $d$ is a positive proper divisor of $k$, then $k/d$ is also a positive proper divisor of $k$. For instance, if $k=p^2$ where $p$ is an odd prime, then there is only one possibility for $d$ which is consistent with Theorem \ref{t3-23}, namely $d=k/d=p$. Notice that in this case, using Theorem \ref{t3-23}, the two results stated in Corollary \ref{c3-22} can be recovered. Another case which illustrates the fact mentioned at the beginning of this remark, is when $k=pq$ wher $p$ and $q$ are odd prime numbers. Then the positive proper divisors of $k$ are $p$ and $q$. In this case, Theorem \ref{t3-23} implies that all $pq$-$T_0T^{\ast}$-perfect number for odd prime numbers $p$ and $q$, have the form $p^{2pq-1}_1$ or $p^{(p-1)/2}_1\cdot p^{(q-1)/2}_2$.
\end{remark}

\begin{definition}
Let $n$ be an integer which is greater than $1$. A \textbf{multiplicative partition} (\seqnum{A001055}) or \textbf{unordered factorization} of $n$ is a decomposition of $n$ into a product of integers which belong to $\llbracket 1,n\rrbracket$, where the order of terms is irrelevant.
\end{definition}

\begin{theorem}\label{t3-24}
Let $k$ be an odd positive integer and let $m$ be a nonzero positive integer. Then we have the following:
\begin{enumerate}
    \item  The integers whose prime decompositions have the form $p^{2^{m+1}k-1}_1$, are $2^m k$-$T_0T^{\ast}$-perfect numbers.
    \item For $k>1$, if there exist integers $d_1,\ldots,d_{m+2}$ which form a multiplicative partition of $k$ such that for all $i\in\llbracket 1,m+2\rrbracket$, $d_i\equiv 1\pmod{2^{m+1}}$, then the integers whose prime decompositions have the form $p^{(d_1-1)/2^{m+1}}_1\cdots p^{(d_{m+2}-1)/2^{m+1}}_{m+2}$, are $2^m k$-$T_0T^{\ast}$-perfect numbers. 
    \item Any integer which doesn't have prime decomposition as $p^{2^{m+1}k-1}_1$ or \\ $p^{(d_1-1)/2^{m+1}}_1\cdots p^{(d_{m+2}-1)/2^{m+1}}_{m+2}$, are not $2^m k$-$T_0T^{\ast}$-perfect numbers.
\end{enumerate}
 
\end{theorem}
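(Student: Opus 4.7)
The plan is to apply Equation (\ref{E3-1}) with the target value $k$ replaced by $2^m k$, and then perform a careful case analysis on the number of prime factors $r$. That is, we need to solve
\[
2^r\cdot(\alpha_1\cdot 2^{r-1}+1)\cdots(\alpha_r\cdot 2^{r-1}+1)=2^{m+2}k,
\]
where $k$ is odd. The whole argument will be driven by comparing the $2$-adic valuations of both sides.

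First I would dispose of the case $r=1$: the equation reduces to $2(\alpha_1+1)=2^{m+2}k$, giving $\alpha_1=2^{m+1}k-1$ immediately, which is statement (1). Next, for $r\geq 2$, the key observation is that each factor $\alpha_i\cdot 2^{r-1}+1$ is odd, since $r-1\geq 1$ forces $\alpha_i\cdot 2^{r-1}$ to be even. Hence the $2$-adic valuation of the left-hand side is exactly $r$, while that of the right-hand side is exactly $m+2$ (because $k$ is odd). This forces $r=m+2$. The equation then collapses to
\[
\prod_{i=1}^{m+2}(\alpha_i\cdot 2^{m+1}+1)=k.
\]
Setting $d_i:=\alpha_i\cdot 2^{m+1}+1$ for each $i\in\llbracket 1,m+2\rrbracket$, we see that $d_i\equiv 1\pmod{2^{m+1}}$ and $\alpha_i=(d_i-1)/2^{m+1}$. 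Moreover, the condition $\alpha_i\geq 1$ translates to $d_i\geq 2^{m+1}+1>1$, so the tuple $(d_1,\ldots,d_{m+2})$ is a genuine multiplicative partition of $k$ into $m+2$ parts, each exceeding $1$ and each congruent to $1$ modulo $2^{m+1}$. This requires $k>1$, and yields statement (2). Conversely, given any such multiplicative partition, reversing the substitution produces a valid choice of $(\alpha_1,\ldots,\alpha_{m+2})$ satisfying the equation, so the form described is indeed $2^m k$-$T_0T^{\ast}$-perfect.

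Finally, statement (3) is now essentially a summary: the case analysis above shows that for $r\geq 2$, the only solutions arise when $r=m+2$ with the $\alpha_i$ coming from such a partition; and for $r=1$, the only solution is $\alpha_1=2^{m+1}k-1$. Any integer whose prime decomposition falls outside these two templates cannot satisfy the governing equation and therefore cannot be $2^m k$-$T_0T^{\ast}$-perfect.

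I do not anticipate a serious obstacle: the argument is essentially a $2$-adic valuation bookkeeping. The mildly subtle point is recognising that for $r\geq 2$ all factors on the left are automatically odd, which instantly pins down $r=m+2$; once that is in hand, the bijection between admissible exponent vectors $(\alpha_1,\ldots,\alpha_{m+2})$ and multiplicative partitions of $k$ into $m+2$ parts with each part $\equiv 1\pmod{2^{m+1}}$ is a direct translation. The only care needed is to track why $d_i>1$ for each $i$, which follows from $\alpha_i\geq 1$ and is what distinguishes a valid prime factorisation from a degenerate one.
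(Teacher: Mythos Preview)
Your proof is correct and follows essentially the same approach as the paper: both start from Equation~(\ref{E3-1}) with right-hand side $2^{m+2}k$ and perform a case analysis on $r$, using the oddness of the factors $\alpha_i\cdot 2^{r-1}+1$ for $r\geq 2$ to rule out all values except $r=1$ and $r=m+2$. The only difference is cosmetic: where the paper treats the ranges $2\leq r<m+2$, $r=m+2$, and $r>m+2$ as three separate sub-cases, you compress the parity argument into a single $2$-adic valuation comparison that pins down $r=m+2$ in one stroke.
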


\begin{proof}
According to Equation (\ref{E3-1}), we must solve
\begin{equation}\label{E3-24}
2^r\cdot(\alpha_1\cdot 2^{r-1}+1)\cdots(\alpha_r\cdot 2^{r-1}+1)=2^{m+2}k
\end{equation}
Next, we will examine the different cases beginning from $r=1$.
\begin{enumerate}[label=$\bullet$]
\item $r=1$: (\ref{E3-24}) becomes $2\cdot(\alpha_1+1)=2^{m+2}k$; it gives $\alpha_1=2^{m+1}k-1$.
\item $2\leq r<m+2$: (\ref{E3-24}) becomes $2^r\cdot(\alpha_1\cdot 2^{r-1}+1)\cdots(\alpha_r\cdot 2^{r-1}+1)=2^{m+2}k$; it gives $(\alpha_1\cdot 2^{r-1}+1)\cdots(\alpha_r\cdot 2^{r-1}+1)=2^{m+2-r}k$; 
which is impossible since $(\alpha_1\cdot 2^{r-1}+1)\cdots(\alpha_r\cdot 2^{r-1}+1)$ is odd whereas $2^{m+2-r}k$ for $2\leq r<m+2$ is even.
\item $r=m+2$: (\ref{E3-24}) becomes $(\alpha_1\cdot 2^{m+1}+1)\cdots(\alpha_{m+2}\cdot 2^{m+1}+1)=k$; if $k=1$, then it is not possible since $(\alpha_1\cdot 2^{m+1}+1)\cdots(\alpha_{m+2}\cdot 2^{m+1}+1)>2^{(m+2)(m+1)}>1$ for $m\in\mathbb{N}^{\ast}$; so, $k>1$; in this case, let $k=q^{\beta_1}_1\cdots q^{\beta_s}_s$ be the prime decomposition of the odd integer $k$ where $q_i$ is an odd prime for all $i\in\llbracket 1,s\rrbracket$ with $s\in\mathbb{N}^{\ast}$ and $\beta_i\in\mathbb{N}^{\ast}$ for all $i\in\llbracket 1,s\rrbracket$ with $s\in\mathbb{N}^{\ast}$; then according to the fundamental theorem of arithmetic, for all $i\in\llbracket 1,m+2\rrbracket$, we have $2\alpha_i+1=q^{\gamma_{i,1}}_1\cdots q^{\gamma_{i,s}}_s$ and $\beta_i=\sum^s_{j=1}\gamma_{i,j}$; since for all $i\in\llbracket 1,m+2\rrbracket$, $\alpha_i\geq 1$, for given $i\in\llbracket 1,m+2\rrbracket$, there exists at least one integer among the integers $\gamma_{i,j}$ ($j=1,\ldots,s$) which is greater than $1$; we must have $2<q^{\gamma_{i,1}}_1\cdots q^{\gamma_{i,s}}_s<k$; in the following, for all $i\in\llbracket 1,m+2\rrbracket$, we set $d_i=q^{\gamma_{i,1}}_1\cdots q^{\gamma_{i,s}}_s$; accordingly, for all $i\in\llbracket 1,m+2\rrbracket$, $d_i$ is a positive proper divisor of $k$; we have also $k=d_1\cdots d_{m+2}$ and for all $i\in\llbracket 1,m+2\rrbracket$, $\alpha_i\cdot 2^{m+1}+1=d_i$; provided for all $i\in\llbracket 1,m+2\rrbracket$, $d_i-1$ is divisible by $2^{m+1}$, it gives $\alpha_i=(d_i-1)/2^{m+1}$.
\item $r>m+2$: (\ref{E3-24}) becomes $2^{r-(m+2)}\cdot(\alpha_1\cdot 2^{r-1}+1)\cdots(\alpha_r\cdot 2^{r-1}+1)=k$; which is impossible since $2^{r-(m+2)}\cdot(\alpha_1\cdot 2^{r-1}+1)\cdots(\alpha_r\cdot 2^{r-1}+1)$ for $r>m+2$ is even whereas $k$ is assumed to be odd. 
\end{enumerate}
So, only the case where $r=1$ for odd positive integer $k$ and for nonzero positive integer $m$ and the case where $r=m+2$ for odd positive integer $k$ which has a multiplicative partition whose members are congruent to $1$ modulo $2^{m+1}$ for $m\in\mathbb{N}^{\ast}$, are possible. This completes the proof.
\end{proof}

\begin{corollary}\label{c3-24}
Let $m$ be a nonzero positive integer. All $2^m(2^{m+1}+1)^{m+2}$-$T_0T^{\ast}$-perfect numbers have the form $p^{2^{m+1}(2^{m+1}+1)^{m+2}-1}_1$ or $p_1\cdots p_{m+2}$.
\end{corollary}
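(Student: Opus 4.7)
The plan is to apply Theorem \ref{t3-24} with the odd positive integer $k=(2^{m+1}+1)^{m+2}$ (odd because $2^{m+1}+1$ is odd), so that the target exponent $2^m(2^{m+1}+1)^{m+2}$ matches the form $2^m k$ appearing in that theorem. The first form in the corollary is then immediate from part (1) of Theorem \ref{t3-24}: we obtain the $2^m k$-$T_0T^\ast$-perfect number $p_1^{2^{m+1}k-1}=p_1^{2^{m+1}(2^{m+1}+1)^{m+2}-1}$.

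For the second form, I would exhibit the obvious multiplicative partition $d_1=\cdots=d_{m+2}=2^{m+1}+1$ of $k$ and feed it into part (2) of Theorem \ref{t3-24}. Each $d_i$ plainly satisfies $d_i\equiv 1\pmod{2^{m+1}}$ and yields $\alpha_i=(d_i-1)/2^{m+1}=1$, so the resulting number is $n=p_1 p_2\cdots p_{m+2}$.

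The main task is then to rule out any other admissible multiplicative partition, so that by part (3) of Theorem \ref{t3-24} these are the only $2^m k$-$T_0T^\ast$-perfect numbers. The key observation is that any integer $d>1$ with $d\equiv 1\pmod{2^{m+1}}$ must satisfy $d\geq 2^{m+1}+1$. Consequently, for any multiplicative partition $d_1,\ldots,d_{m+2}$ of $k=(2^{m+1}+1)^{m+2}$ with every $d_i\equiv 1\pmod{2^{m+1}}$ (and $d_i>1$, as required implicitly by the proof of Theorem \ref{t3-24} since $\alpha_i\geq 1$ forces $d_i=\alpha_i\cdot 2^{m+1}+1\geq 2^{m+1}+1$), one has
\[
(2^{m+1}+1)^{m+2}=\prod_{i=1}^{m+2}d_i\geq (2^{m+1}+1)^{m+2},
\]
so the inequality must be an equality, forcing $d_i=2^{m+1}+1$ for each $i$ and giving back the partition above.

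This uniqueness of the multiplicative partition is the only nontrivial step; the rest is a direct bookkeeping application of Theorem \ref{t3-24}. The mild obstacle is simply recognizing that the congruence condition $d_i\equiv 1\pmod{2^{m+1}}$ together with $d_i>1$ pins down each $d_i$ from below by $2^{m+1}+1$, after which the product constraint leaves no room for any other choice.
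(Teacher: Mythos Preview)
Your proof is correct and follows exactly the paper's approach: the paper simply states that the corollary ``follows directly from Theorem~\ref{t3-24} by taking $k=(2^{m+1}+1)^{m+2}$,'' noting that $k$ is odd. In fact, you supply more than the paper does, since you make explicit the uniqueness of the admissible multiplicative partition (via the lower bound $d_i\geq 2^{m+1}+1$ forced by $\alpha_i\geq 1$), a step the paper leaves to the reader.
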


\noindent Corollary \ref{c3-24} follows directly from Theorem \ref{t3-24} by taking $k=(2^{m+1}+1)^{m+2}$ for $m\in\mathbb{N}^{\ast}$. Notice that $k$ is odd here.

\begin{theorem}\label{t3-3}
Let $p$ be a prime, with $2^p-1$ being a Mersenne prime. Then
$2^{p-1}\cdot(2^p-1)$ is the only even perfect number which is a
$3\cdot(2p-1)$-$T_0T^{\ast}$-perfect number.
\end{theorem}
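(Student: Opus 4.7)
The plan is to reduce the statement to a one-line parameter match via the Euclid-Euler classification of even perfect numbers. Recall that every even perfect number has the form $m = 2^{q-1}(2^q - 1)$ where $q$ is a prime such that $2^q - 1$ is itself prime. Thus the prime factorization of any even perfect $m$ has exactly $r = 2$ distinct prime factors, with $p_1 = 2$ and exponent $\alpha_1 = q-1$, and $p_2 = 2^q - 1$ and exponent $\alpha_2 = 1$. (The case $q = 2$ just collapses $\alpha_1 = \alpha_2 = 1$, but $r = 2$ still holds, so no separate treatment is required.)

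Next, I would substitute these data into the defining equation \eqref{E3-1} to compute, for each such $m$, the unique value $k(m)$ for which $m$ is $k(m)$-$T_0T^{\ast}$-perfect. With $r = 2$, \eqref{E3-1} reads
$$
4\,(2\alpha_1 + 1)(2\alpha_2 + 1) \;=\; 4k,
$$
and plugging in $\alpha_1 = q-1$, $\alpha_2 = 1$ gives
$$
k \;=\; (2q - 1)\cdot 3 \;=\; 3(2q - 1).
$$

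Finally, I would observe that asking for $m$ to be a $3(2p-1)$-$T_0T^{\ast}$-perfect number is exactly the equation $3(2q - 1) = 3(2p - 1)$, which forces $q = p$. Thus the only even perfect number satisfying the hypothesis is $m = 2^{p-1}(2^p - 1)$, as claimed.

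There is essentially no obstacle here beyond invoking the Euclid-Euler form; the heart of the argument is the direct substitution into \eqref{E3-1}, and the analysis of cases $r \geq 3$ or $r = 1$ done in Theorem \ref{t3-2} is not needed, because the Euclid-Euler form pins $r = 2$ from the start. The only thing worth double-checking in the write-up is that no even perfect number can have $r \neq 2$, which is immediate from the Euclid-Euler classification.
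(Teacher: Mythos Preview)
Your proof is correct and takes a genuinely more direct route than the paper. The paper first classifies \emph{all} $3(2p-1)$-$T_0T^{\ast}$-perfect numbers by running through the cases $r=1$, $r=2$, $r\geq 3$ in equation~\eqref{E3-1}, introduces a parametrisation $\alpha_1(k),\alpha_2(k)$ for the $r=2$ solutions, and only then intersects with the Euclid--Euler form to extract $q=p$. You instead observe from the outset that the Euclid--Euler classification forces $r=2$ with $(\alpha_1,\alpha_2)=(q-1,1)$, so that for any even perfect $m$ the left side of~\eqref{E3-1} is already determined and the value of $k$ for which $m$ is $k$-$T_0T^{\ast}$-perfect is uniquely $3(2q-1)$; matching this against $3(2p-1)$ immediately gives $q=p$. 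Your approach buys brevity and avoids the case analysis and parametrisation entirely, since the theorem only asks about even perfect numbers; the paper's longer argument has the side benefit of describing all $3(2p-1)$-$T_0T^{\ast}$-perfect numbers (including the $r=1$ family $n=p_1^{12p-5}$), but that extra information is not needed for the statement as phrased.
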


\begin{proof}

According to (\ref{E3-1}), we must solve the equation
\be
\label{E3-3}
2^r\cdot(\alpha_1\cdot 2^{r-1}+1)\cdots(\alpha_r\cdot 2^{r-1}+1)=12\cdot(2p-1)
\ee
where $p$ is a prime such that $2^p-1$ is a Mersenne prime.

First we find the possible forms of all
$3\cdot(2p-1)$-$T_0T^{\ast}$-perfect numbers.
In the following, we will investigate the different cases beginning
from $r=1$. 
\begin{enumerate}[label=$\bullet$]
\item $r=1$: (\ref{E3-3}) becomes
  $2\cdot(\alpha_1+1)=12\cdot(2p-1)$; it gives $\alpha_1=12p-5$.
\item $r=2$: (\ref{E3-3}) becomes
  $4\cdot(2\alpha_1+1)\cdot(2\alpha_2+1)=12\cdot(2p-1)$ which is equivalent 
  to $(2\alpha_1+1)\cdot(2\alpha_2+1)=3\cdot(2p-1)$; a trivial solution is
  given by $\alpha_1=1$ and $\alpha_2=p-1$ (notice that a particular subcase of
  this trivial solution is $\alpha_1=\alpha_2=1$ which is consistent
  with (\ref{E3-3}) only if $p=2$);
  more generally, we must have either $2\alpha_1+1\equiv 0$ (mod $3$) or
  $2\alpha_2+1\equiv 0$ (mod $3$); without loss of generality, let us take
  $2\alpha_2+1=3k$ with $k$ an odd positive integer which divides
  $2p-1$ so that (\ref{E3-3}) is satisfied; then it is
  not difficult to see that $2\alpha_1+1=\frac{2p-1}{k}$ and we obtain
  for $\alpha_1$ and $\alpha_2$, the parametrisation
  $\alpha_1(k)=\frac{2p-(k+1)}{2k}$ and $\alpha_2(k)=\frac{3k-1}{2}$; notice
  that the subcase $\alpha_1=p-1$ and $\alpha_2=1$ is recovered when $k=1$.  
\item $r\geq 3$: (\ref{E3-3}) becomes
  $2^{r-2}\cdot(\alpha_1\cdot 2^{r-1}+1)\cdots(\alpha_r\cdot
  2^{r-1}+1)=3\cdot(2p-1)$ which is not possible since
  $2^{r-2}\!\not|3\cdot(2p-1)$ for $r\geq 3$.
\end{enumerate}
So, only the case where $r=1$ for which $n=p^{12p-5}_1$
and the case where $r=2$ for which $n=p^{\frac{2p-(k+1)}{2k}}_1\cdot
p^{\frac{3k-1}{2}}_2$ such that $k$ is an odd positive integer which
divides $2p-1$ are valid.

In particular, when $r=2$, if $k=1$, then we get
$n=p^{p-1}_1\cdot p_2$. Conversely, when $r=2$, if $\alpha_1(k)=p-1$ and $\alpha_2(k)=1$, then using the parametrisation given above for $\alpha_1$ and $\alpha_2$
when $r=2$, we have $\dfrac{2p=(k+1)}{2k}=p-1$ and $\dfrac{3k-1}{2}=1$. This implies that $k=1$. Therefore, when $r=2$
$$
n=p^{p-1}_1\cdot p_2\,\Leftrightarrow\,\left\{\begin{array}{c}
\alpha_1(k)=p-1;
\\
\alpha_2(k)=1.
\end{array}\right.\,\Leftrightarrow\,k=1.
$$

Secondly we see if there exists an even perfect number which is
$3\cdot(2p-1)$-$T_0T^{\ast}$-perfect number. According to
the Euclid-Euler theorem, an even perfect number takes the form
$2^{q-1}\cdot(2^q-1)$, where $q$ is a prime such that $2^q-1$ is a
Mersenne prime. Accordingly, an even perfect number corresponds to
the case where $r=2$ which was investigated above. So,
$n=p^{\alpha_1}_1\cdot p^{\alpha_2}_2$ is an even perfect number if and only if
$$
p^{\alpha_1}_1\cdot p^{\alpha_2}_2=2^{q-1}\cdot(2^q-1).
$$

Since the prime factorization of an integer is unique up to the order
of the prime factors, without loss of generality, taking
$1\leq\alpha_2\leq\alpha_1$, since $2$ and
$2^q-1$ are prime, we will have $p_1=2$, $p_2=2^q-1$ with $\alpha_1(k)=q-1$ and $\alpha_2(k)=1$. Using the parametrization introduced above when $r=2$, this system is
equivalent to $\dfrac{2p-(k+1)}{2k}=q-1$ and $\dfrac{3k-1}{2}=1$. Solving this system for $(k,p)$, it results that $k=1$ and $p=q$.
Since when $r=2$
$$
n=p^{p-1}_1\cdot p_2\,\Leftrightarrow\,\left\{\begin{array}{c}
\alpha_1(k)=p-1;
\\
\alpha_2(k)=1.
\end{array}\right.\,\Leftrightarrow\,k=1.
$$

Let us finish the proof by studying the subcase where both $2\alpha_1+1$ and $2\alpha_2+1$ are divisible by $3$.
If both $2\alpha_1+1$ and
$2\alpha_2+1$ are divisible by $3$, then there 
exist two odd positive integers $k_1$ and $k_2$ such that
$2\alpha_1+1=3k_1$ and $2\alpha_2+1=3k_2$. It gives $\alpha_1=\dfrac{3k_1-1}{2}$ and $\alpha_2=\dfrac{3k_2-1}{2}$. Accordingly, the equation $(2\alpha_1+1)\cdot(2\alpha_2+1)=3\cdot(2p-1)$ becomes $9k_1k_2=3\cdot(2p-1)$. After simplification, we get $3k_1k_2=2p-1$.

If $n$ is an even perfect number, then $n=2^{p-1}\cdot(2^p-1)$ and
without loss of generality, (since for $r=2$, $n=p^{\alpha_1}_1\cdot
p^{\alpha_2}_2$ and since $2^p-1$ is a Mersenne prime)
we can set $p_1=2$ and $p_2=2^p-1$. Then we have
$$
\alpha_1=\frac{3k_1-1}{2}=p-1\,\Rightarrow\,3k_1=2p-1
$$
and
$$
\alpha_2=\frac{3k_2-1}{2}=1\,\Rightarrow\,k_2=1.
$$
It is consistent with the equation $3k_1k_2=2p-1$ and the fact that $k_1$, $k_2$ are odd positive integers. It means that
$n=p^{p-1}_1p_2$ as expected. In fact, using the parametrisation
$$
\alpha_1(k)=\frac{2p-(k+1)}{2k}
$$
$$
\alpha_2(k)=\frac{3k-1}{2}
$$
and taking by identification $k=k_2$ and $k_1=\frac{2p-1}{3k}$, because
$3k_1k_2=2p-1$ when both $2\alpha_1+1$ and $2\alpha_2+1$ are
divisible by $3$, this subcase can be recovered. In particular, if
$k=1$, then it is consistent with the statement of Theorem \ref{t3-3}. Notice that if both  $2\alpha_1+1$ and $2\alpha_2+1$ are divisible by $3$, then $p\equiv 2\pmod 3$. Indeed, since $k_1$ is an odd positive integer, there exists an integer $m_1$ such that $k_1=2m_1+1$. Or,
$3k_1=2p-1$. So, $2p=3k_1+1=6m_1+4$. It gives $p=3m_1+2$. Notice also
that $m_1$ shall be an odd positive integer since $p$ is necessarily
prime so that $2^p-1$ be a Mersenne prime.

We conclude that $2^{p-1}\cdot(2^p-1)$ is the only even perfect number
which is a $3\cdot(2p-1)$-$T_0T^{\ast}$-perfect number.
\end{proof}

In this section, we have presented results only on the canonical representation of $k$-$\ktt$ perfect numbers. Other techniques may be used to derive results on the bounds of such numbers, but here we do not proceed in that direction.

\section{{Acknowledgements}}

The authors are grateful to an anonymous referee for valuable comments and for pointing them to some references. The second author is grateful to the excellent facilities provided to him by The Abdus Salam International Centre for Theoretical Physics, Trieste (Italy), where this work was initiated.

\bibliographystyle{amsplain}

\end{document}